\documentclass[11pt]{amsart}

\oddsidemargin -7mm
\evensidemargin -7mm
\topmargin -10mm
\textwidth 170mm
\textheight 235mm

\newtheorem{theorem}{Theorem}[section]

\newtheorem{lemma}[theorem]{Lemma}
\newtheorem{corollary}[theorem]{Corollary}

\title[Relations in universal Lie nilpotent associative algebras]{Relations in universal Lie nilpotent associative algebras of class 4}

\author{Eudes Antonio da Costa}

\address{Colegiado de Matem\' atica, Universidade Federal do Tocantins, 77330-0000, Arraias, TO, Brasil}

\email{eudes@uft.edu.br}

\author{Alexei Krasilnikov}

\address{Departamento de Matem\'atica, Universidade de Bras\'\i lia, 70910-900 Bras\'\i lia, DF, Brasil}

\email{alexei@unb.br}

\date{}

\begin{document}

\begin{abstract}
Let $K$ be a unital associative and commutative ring and let $K \langle X \rangle$ be the free unital associative  $K$-algebra on a non-empty set $X$ of free generators. Define a left-normed commutator $[a_1, a_2, \dots , a_n]$ inductively by $[a_1, a_2] = a_1 a_2 - a_2 a_1$, $[a_1, \dots , a_{n-1}, a_n] = [[a_1, \dots , a_{n-1}], a_n]$ $(n \ge 3)$. For $n \ge 2$, let $T^{(n)}$ be the two-sided ideal in $K \langle X \rangle$ generated by all commutators $[a_1,a_2, \dots , a_n]$ $( a_i \in K \langle X \rangle )$.

It can be easily seen that the ideal $T^{(2)}$ is generated (as a two-sided ideal in $K \langle X \rangle$) by the commutators $[x_1, x_2]$ $(x_i \in X)$. It is well-known that $T^{(3)}$ is generated by the polynomials $[x_1,x_2,x_3]$ and $[x_1,x_2][x_3,x_4] + [x_1,x_3][x_2,x_4]$ $(x_i \in X)$. A similar generating set for $T^{(4)}$ contains 3 types of polynomials in $x_i \in X$ if $\frac{1}{3} \in K$ and 5 types if $\frac{1}{3} \notin K$. In the present article we exhibit a generating set for $T^{(5)}$ that contains 8 types of polynomials in $x_i \in X$.
\end{abstract}

\maketitle

\section{Introduction}

Let $K$ be a unital associative and commutative ring and let $A$ be a unital associative $K$-algebra. Define a left-normed commutator $[a_1, a_2, \dots , a_n]$ inductively by $[a_1, a_2] = a_1 a_2 - a_2 a_1$, $[a_1, \dots , a_{n-1}, a_n] = [[a_1, \dots , a_{n-1}], a_n]$ $(n \ge 3)$. For $n \ge 2$, let $T^{(n)} (A)$ be the two-sided ideal in $A$ generated by all commutators $[a_1,a_2, \dots , a_n]$ $( a_i \in A )$. An algebra $A$ is called \textit{Lie nilpotent} (of class at most $n-1$) if $T^{(n)} (A) = 0$ for some $n \ge 2$.

Let $K \langle X \rangle$ be the free unital associative $K$-algebra on a non-empty set $X$ of free generators. Define $T^{(n)} = T^{(n)} ( K \langle X \rangle )$.  The quotient algebra $K \langle X \rangle/T^{(n)}$ can be viewed as the universal Lie nilpotent associative $K$-algebra of class $n-1$ generated by $X$.

The study of Lie nilpotent associative rings and algebras was started by Jennings \cite{Jennings47} in 1947. Since then Lie nilpotent associative rings and algebras have been investigated in various papers from various points of view; see, for instance, \cite{AS01, Gordienko07, GP15,GL83, Kras97, Latyshev65, RT99, Volichenko78} and the bibliography there.

Recent interest in Lie nilpotent associative algebras was motivated by the study of the quotients $L_{i}/L_{i+1}$ of the lower central series of the associated Lie algebra of an associative algebra $A$; here $L_n$ is the linear span in $A$ of the set of all commutators $[a_1, a_2, \dots , a_n]$ $(a_i \in A)$. The study of these quotients $L_i /L_{i+1}$ was initiated in 2007 in a pioneering article of Feigin and Shoikhet \cite{FS07} for $A = \mathbb C \langle X \rangle$; further results on this subject can be found, for example, in \cite{AE15,AJ10,BB11,BJ10,BEJKL12,CFZ15,DE08,DKM08,EKM09,JO13,Kerchev13}. Since $T^{(n)} (A)$ is the ideal in $A$ generated by $L_n$, some results about the quotients $T^{(i)} (A)/T^{(i+1)} (A)$ were obtained in these articles as well; in \cite{CFZ15,EKM09,JO13,Kerchev13} the latter quotients were the primary objects of study.

It can be easily seen that the ideal $T^{(2)}$ is generated (as a two-sided ideal in $K \langle X \rangle$) by the commutators $[x_1, x_2]$ $(x_i \in X)$. It is well-known that $T^{(3)}$ is generated by the polynomials
\[
[x_1,x_2,x_3], \qquad [x_1,x_2][x_3,x_4] + [x_1,x_3][x_2,x_4] \qquad (x_i \in X)
\]
(see, for instance, \cite{BEJKL12, Giambruno-Koshlukov01, GK99, Latyshev63}). If $\frac{1}{3} \in K$ then a similar generating set for $T^{(4)}$ contains the polynomials of 3 types (see \cite{EKM09, Gordienko07, Latyshev65, Volichenko78}):
\begin{equation}
\label{polynomial1}
[x_1,x_2,x_3,x_4] \qquad (x_i \in X),
\end{equation}
\begin{equation}
\label{polynomial2}
[x_1,x_2][x_3,x_4,x_5] \qquad (x_i \in X),
\end{equation}
\begin{equation}
\label{polynomial3}
\bigl( [x_1,x_2] [x_3,x_4] + [x_1,x_3] [x_2,x_4] \bigr) [x_5,x_6] \qquad (x_i \in X).
\end{equation}
If $\frac{1}{3} \notin K$ then a set of generators of $T^{(4)}$ consists of the polynomials of types (\ref{polynomial1}), (\ref{polynomial3}) and the polynomials
\begin{align*}
&   [x_1,x_2,x_3] [x_4,x_5,x_6] \qquad (x_i \in X),
\\
& [x_1,x_2,x_3] [x_4,x_5] + [x_1,x_2,x_4][x_3,x_5] \qquad (x_i \in X),
\\
& [x_1,x_2,x_3] [x_4,x_5] + [x_1,x_4,x_3][x_2,x_5] \qquad (x_i \in X)
\end{align*}
(see \cite[Theorem 1.3]{DK15}). In the latter case, in general, the polynomials $[x_1,x_2][x_3,x_4,x_5]$ $(x_i \in X)$ of type (\ref{polynomial2}) do not belong to $T^{(4)}$ but $3 \, [x_1,x_2][x_3,x_4,x_5] \in T^{(4)}$ (see \cite{Kras13}).

The aim of the present article is to exhibit a similar generating set for $T^{(5)}$. Our result is as follows.

\begin{theorem}
\label{TbaseT5}
Let $K$ be a unital associative and commutative ring and let $K \langle X \rangle$ be the free $K$-algebra on a non-empty set $X$ of free generators. Then the ideal $T^{(5)}$ is generated as a two-sided ideal of $K\left\langle X\right\rangle$ by the following polynomials:
%%%%%%%%%%%%%%%%%%%%%%%%%%%%
\begin{equation}
\label{comm_x_5}
[x_{1} , x_{2} , x_{3}, x_{4} , x_{5} ] \qquad (x_i \in X);
\end{equation}
%%%%%%%%%
\begin{equation}
\label{prod_x_33}
[x_{1} , x_{2} , x_{3} ][x_{4} , x_{5} , x_{6} ] \qquad (x_i \in X);
\end{equation}
%%%%%%%%%%%%%%%%%%%%%%%%
\begin{equation}
\label{prod_x_43}
[x_{1}, x_{2}, x_{3}, x_{4} ][x_{5}, x_{6}, x_{7} ] \qquad (x_i \in X);
\end{equation}
%%%%%%%%
\begin{equation}
\label{prod_x_42}
[x_{1}, x_{2}, x_{3}, x_{4} ][x_{5}, x_{6} ] + [x_{1} , x_{2} , x_{3} , x_{5} ][x_{4}, x_{6} ] \qquad (x_i \in X);
\end{equation}
%%%%%%%%%%%%
\begin{equation}
\label{prod_x_322}
[x_{1}, x_{2}, x_{3} ] \big( [x_{4}, x_{5} ][x_{6} , x_{7}] + [x_{4}, x_{6} ][x_{5}, x_{7} ] \big) \qquad (x_i \in X);
\end{equation}
%%%%%%%%%%%%
\begin{equation}
\label{comm_x_2211}
\bigl[ \bigl( [x_{1},x_{2}][x_{3} , x_{4}] + [x_{1},x_{3}][x_{2},x_{4}] \bigr) , x_{5}, x_{6} \bigr] \qquad (x_i \in X);
\end{equation}
%%%%%%%%%%%%%%
\begin{align}
\label{prod_x_2212}
& \bigl[ \big( [x_{1},x_{2}][x_{3},x_{4}] + [x_{1}, x_{3}] [x_{2}, x_{4}] \big) , x_{5} \bigr] [x_{6},x_{7}]
\\
+ & \bigl[ \big( [x_{1},x_{2}][x_{3},x_{4}] + [x_{1}, x_{3}] [x_{2},x_{4}] \big) ,x_{6} \bigr] [x_{5},x_{7}]  \qquad (x_i \in X); \nonumber
\end{align}
%%%%%%%%%%%%%%%%%%%%
\begin{align}
\label{prod_x_2222}
& \bigl( [x_{1}, x_{2}][ x_{3}, x_{4}] + [x_{1} , x_{3}] [x_{2}, x_{4} ] \bigr)
%\\
%\times & 
\bigl( [x_{5}, x_{6} ][x_{7} , x_{8}]+ [x_{5}, x_{7} ][x_{6}, x_{8} ] \bigr)
\qquad (x_i \in X). 
%\nonumber
\end{align}
%%%%%%%%%%%%%%%%%%%%
\end{theorem}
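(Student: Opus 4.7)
The argument splits into the two inclusions $U \subseteq T^{(5)}$ and $T^{(5)} \subseteq U$, where $U$ denotes the two-sided ideal of $K\langle X\rangle$ generated by (\ref{comm_x_5})--(\ref{prod_x_2222}). The inclusion $U \subseteq T^{(5)}$ is a routine verification. Polynomial (\ref{comm_x_5}) is itself a length-five commutator. For the remaining polynomials, one invokes the classical product identity $T^{(i)} T^{(j)} \subseteq T^{(i+j-1)}$ together with the known membership $[x_1,x_2][x_3,x_4]+[x_1,x_3][x_2,x_4] \in T^{(3)}$; each generator then appears as an element of some $T^{(i)} T^{(j)}$ or as an iterated commutator of a $T^{(3)}$-element with further variables, in either case landing in $T^{(5)}$.

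The substantive direction is $T^{(5)} \subseteq U$. Since $T^{(5)}$ is the two-sided ideal generated by length-five commutators and $U$ is itself a two-sided ideal, by multilinearity it suffices to show $[a_1, a_2, a_3, a_4, a_5] \in U$ for arbitrary monomials $a_1, \dots, a_5$ in $X$. The plan is to expand this commutator by iterated application of the Leibniz rule $[uv, w] = u[v,w] + [u,w]v$ and its right-slot analogue $[u, vw] = [u,v]w + v[u,w]$, rewriting $[a_1, \dots, a_5]$ as a $K$-linear combination of terms of the form $m_0 c_1 m_1 c_2 \cdots c_k m_k$, where the $m_i$ are monomials in $X$ and $c_1, \dots, c_k$ are iterated commutators of individual variables $x_j$. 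The monomial factors are absorbed by the ideal property of $U$, reducing the task to showing that each product $c_1 \cdots c_k$ lies in $U$.

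A careful analysis of the Leibniz expansion shows that the possible shapes of the terminal product $c_1 \cdots c_k$, classified by the partition of its total commutator length, are $\{5\}$, $\{4, 2\}$, $\{3, 3\}$, $\{3, 2, 2\}$, and $\{2, 2, 2, 2\}$. The generators (\ref{comm_x_5})--(\ref{prod_x_2222}) are designed to cover these shapes: (\ref{comm_x_5}) handles $\{5\}$ directly, (\ref{prod_x_33}) handles $\{3, 3\}$, and (\ref{prod_x_42}), (\ref{prod_x_322}), (\ref{prod_x_2222}) handle $\{4, 2\}$, $\{3, 2, 2\}$, and $\{2, 2, 2, 2\}$ in specifically symmetrized forms. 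The remaining generators (\ref{prod_x_43}), (\ref{comm_x_2211}), and (\ref{prod_x_2212}) furnish the auxiliary identities needed to reduce individual unsymmetrized expansion terms to the symmetrized forms; notably, (\ref{comm_x_2211}) asserts that the iterated commutator of the Jacobi-type $T^{(3)}$-element $[x_1,x_2][x_3,x_4]+[x_1,x_3][x_2,x_4]$ with two further variables lies in $U$, which is the key tool for matching a given Leibniz-expansion term with its partner in a symmetric generator.

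The main obstacle is the combinatorial bookkeeping. An individual unsymmetrized product of shape, say, $\{4, 2\}$ need not be in $U$, but specific pairs of Leibniz siblings produced in the expansion of $[a_1, \dots, a_5]$ are. One therefore needs to track carefully how terms pair up in the expansion and verify that each such combination matches, modulo $U$, a $K$-linear combination of the listed generators after appropriate application of Jacobi-type identities and rearrangement of monomial factors via the ideal property. The argument is elementary but requires meticulous attention to the combinatorics of the iterated Leibniz expansion.
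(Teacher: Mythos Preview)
Your outline has a genuine gap in each direction.

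\medskip
\textbf{The direction $U\subseteq T^{(5)}$ is not routine.} The identity $T^{(i)}T^{(j)}\subseteq T^{(i+j-1)}$ that you invoke is \emph{false} over a general coefficient ring $K$: the paper itself records that $[x_1,x_2][x_3,x_4,x_5]\notin T^{(4)}$ when $\tfrac{1}{3}\notin K$, so $T^{(2)}T^{(3)}\not\subseteq T^{(4)}$. Likewise, your claim that an iterated commutator of a $T^{(3)}$-element with two further variables lands in $T^{(5)}$ is unjustified: $T^{(3)}$ is an ideal, not just the span of triple commutators, and $[T^{(3)},K\langle X\rangle]\subseteq T^{(4)}$ does not hold (expand $[pcq,b]$ with $c$ a triple commutator to see two of the three summands stay only in $T^{(3)}$). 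In the paper, the fact that $[a_1,a_2,a_3][a_4,a_5,a_6]\in T^{(5)}$ is the content of a separate lemma with a real argument: one first shows this product is alternating in all six entries modulo $T^{(5)}$, then combines Jacobi (yielding $3$ times the product in $T^{(5)}$) with the observation that swapping the two factors gives both $+1$ and $-1$ modulo $T^{(5)}$ (yielding $2$ times the product in $T^{(5)}$). The remaining generators are then deduced from this lemma and from the symmetrized relation $[a_1,a_2,a_3,a_4][a_5,a_6]+[a_1,a_2,a_3,a_5][a_4,a_6]\in T^{(5)}$, which is obtained directly by expanding $[a_1,a_2,a_3,a_4a_5,a_6]$. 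None of this is a one-line consequence of a product formula.

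\medskip
\textbf{The direction $T^{(5)}\subseteq U$ needs a stronger inductive scheme.} Your plan is to Leibniz-expand $[a_1,\dots,a_5]$ completely and then argue that the resulting terms ``pair up'' into the symmetrized generators. That pairing is precisely the hard part, and you have not indicated any mechanism that guarantees it; a term of shape $\{4,2\}$ arising from the expansion has no canonical sibling in general, and the intermediate monomial factors $m_i$ sitting \emph{between} commutators cannot simply be absorbed by the two-sided ideal property (pushing them to the outside creates further commutator corrections). The paper sidesteps this by proving, by induction on total degree, that \emph{all eight} shapes (\ref{comm_x_5})--(\ref{prod_x_2222}) lie in $U$ when the $x_i$ are replaced by arbitrary monomials $a_i$. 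With that strengthened hypothesis, splitting one $a_i=a_i'a_i''$ and expanding produces, besides lower-degree instances of the same shape, only instances of the \emph{other} shapes at the current degree, which have already been handled earlier in the same inductive step. This simultaneous induction is what replaces the unproven ``Leibniz siblings pair up'' claim in your sketch.
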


%Since the polynomials (\ref{comm_x_5})--(\ref{prod_x_2222}) are contained in the two-sided ideal
% generated by the polynomials $[a_1,a_2,a_3,a_4,a_5]$ where all $a_i$ are monomials and the sum
% of the degrees of $a_i$ is less than or equal to $8$, $\sum_{i=1}^5 \deg a_i \le 8$, we have the %following.
%
%\begin{corollary}
%\label{corollary_mainresult}
%The ideal $T^{(5)}$ is generated as a two-sided ideal of $K \langle X \rangle$ by the polynomials 
%\[
%[a_1,a_2,a_3,a_4,a_5]
%\]
%where all $a_i$ are monomials and $\sum_{i=1}^5 \deg a_i \le 8$.
%\end{corollary} 
%
%One can easily check using the results about $T^{(3)}$ and $T^{(4)}$ mentioned above that, for 
%$n <5$, the ideal $T^{(n)}$ is generated as a two-sided ideal of $K \langle X \rangle$ by the
% commutators $[a_1, \dots , a_n]$ where all $a_i$ are monomials and $\sum_{i=1}^n \deg a_i \le
% 2(n-1)$. By Corollary \ref{corollary_mainresult}, this is also true for $n=5$. This raises the following
% question.
%
%\begin{question}
%Is the ideal $T^{(n)}$ generated as a two-sided ideal of $K \langle X \rangle$ by the polynomials 
%$[a_1, \dots ,a_n]$ where all $a_i$ are monomials and $\sum_{i=1}^n \deg a_i \le 2(n-1)$?
%\end{question}

It can be easily seen that the additive group of the ring $\mathbb Z\langle X \rangle / T^{(2)}$ is free abelian. It was shown in \cite{BEJKL12} that the additive group of $\mathbb Z \langle X \rangle / T^{(3)}$ is also free abelian. On the other hand, the additive group of the ring $\mathbb Z \langle X \rangle / T^{(4)}$ is a direct sum $A \oplus B$ of a free abelian group $A$ and an elementary abelian $3$-group $B$ (see \cite{Kras13}); bases of $A$ and $B$ were found in \cite{Kras13} and \cite{DK15}, respectively. Computational data by Cordwell, Fei and Zhou presented in \cite[Appendix A]{CFZ15} suggests that if $n \ge 6$ then the additive group of the ring $\mathbb Z \langle X \rangle / T^{(n)}$  is also a direct sum of a free abelian group and a non-trivial elementary abelian $3$-group while for $n = 5$ this group is free abelian. It is still an open problem whether the additive group of $\mathbb Z \langle X \rangle / T^{(5)}$ is free abelian indeed and, more generally, whether $K \langle X \rangle / T^{(5)}$ is a free $K$-module; we hope that our result might be the first step in solving it.

%%%%%%%%%%%%%%%%%%%%%%%%%%%%%%%%%%%%%%%%%%%%
%%%%%%%%%%%%%%%%%%%%%%%%%%%%%%%%%%%%%%%%%%%%
%%%%%%%%%%%%%%%%%%%%%%%%%%%%%%%%%%%%%%%%%%%%

\section{Proof of Theorem  1.1 }

Let $A$ be an associative $K$-algebra. In the proof we will make use of the following identities:
\begin{align}
& [u,v_1 v_2] = v_1 [u, v_2] + [u, v_1] v_2; \qquad [u_1 u_2, v] = u_1 [u_2, v] + [u_1, v] u_2; \nonumber
\\
& [u_1 u_2, v, w] = u_1 [u_2, v, w] + [u_1, w] [u_2, v] + [u_1, v] [u_2, w] + [u_1, v, w] u_2; \label{comm_exp}
\\
& \begin{aligned} & [u_1 u_2, v, w, t] && = u_1 [u_2, v, w, t] + [u_1, t] [u_2, v, w] + [u_1, w] [u_2, v, t] + [u_1, w, t] [u_2, v]
\\
& &&  + [u_1, v] [u_2, w, t] + [u_1, v, t] [u_2, w] + [u_1, v, w] [u_2, t] + [u_1, v, w, t] u_2.
\end{aligned} \nonumber
\end{align}
These identities  hold for all $u_i, v_j, u, v, w, t \in A$; they can be checked by straightforward calculation or using the Leibniz rule since, for each $v \in A$, the map $u \rightarrow [u,v]$ is a derivation of $A$.

Let $I$ be the two-sided ideal of $K\left\langle X\right\rangle$ generated by the polynomials  (\ref{comm_x_5})--(\ref{prod_x_2222}). To prove the theorem one has to verify that $I \subseteq T^{(5)}$ and $T^{(5)} \subseteq I$.

To prove that $I \subseteq T^{(5)}$ we will check that all polynomials (\ref{comm_x_5})--(\ref{prod_x_2222}) belong to $T^{(5)}$. It is clear that the polynomials of the form (\ref{comm_x_5}) belong to $T^{(5)}$.

Further, for all $a_i \in K \langle X \rangle$ we have
\begin{align*}
& [a_1,a_2,a_3,a_4 a_5, a_6] = \bigl[ \bigl( a_4 [a_1,a_2,a_3,a_5] + [a_1,a_2,a_3,a_4] a_5 \bigr) , a_6 \bigr]
\\
= & a_4 [a_1,a_2,a_3,a_5,a_6] + [a_4,a_6][a_1,a_2,a_3, a_5] + [a_1,a_2,a_3,a_4][a_5,a_6]
\\
+ & [a_1,a_2,a_3,a_4,a_6]a_5 =  a_4 [a_1,a_2,a_3,a_5,a_6] + [a_1,a_2,a_3, a_5][a_4,a_6]
\\
+ & [a_1,a_2,a_3,a_4][a_5,a_6] - \bigl[ [a_1,a_2,a_3,a_4], [a_5,a_6] \bigr] +  [a_1,a_2,a_3,a_4,a_6]a_5 .
\end{align*}
Since $[a_1,a_2,a_3,a_4a_5, a_6], \, a_4 [a_1,a_2,a_3,a_5,a_6], \, [a_1,a_2,a_3,a_4,a_6]a_5 \in T^{(5)}$ and
\[
\bigl[ [a_1,a_2,a_3,a_4], [a_5,a_6] \bigr] = [a_1,a_2,a_3,a_4, a_5,a_6] - [a_1,a_2,a_3,a_4, a_6,a_5] \in T^{(5)},
\]
we have
\begin{equation}\label{42inT5}
[a_1,a_2,a_3,a_4][a_5,a_6] + [a_1,a_2,a_3, a_5][a_4,a_6] \in T^{(5)}
\end{equation}
for all $a_i \in K \langle X \rangle$. Thus, the polynomials of the form (\ref{prod_x_42}) belong to $T^{(5)}$.

To proceed further we need the following lemma.
\begin{lemma}\label{33inT5}
For all $a_i \in K \langle X \rangle$, we have $[a_1,a_2,a_3][a_4,a_5,a_6] \in T^{(5)}$.
\end{lemma}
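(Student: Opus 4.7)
My plan is to expand the five-fold commutator $[a_1, a_2, a_3 a_4, a_5, a_6] \in T^{(5)}$ in order to obtain the key identity
\[
[a_1, a_2, a_3][a_4, a_5, a_6] + [a_3, a_5, a_6][a_1, a_2, a_4] \in T^{(5)}. \qquad (\star)
\]
First I would write $[a_1, a_2, a_3 a_4] = a_3 [a_1, a_2, a_4] + [a_1, a_2, a_3] a_4$ by the Leibniz rule, then apply identity (\ref{comm_exp}) with $v = a_5$, $w = a_6$ to each summand. The result is a sum of eight terms: two are length-5 left-normed commutators multiplied by a single generator, hence in $T^{(5)}$ trivially; the pair $[a_1, a_2, a_3, a_6][a_4, a_5] + [a_1, a_2, a_3, a_5][a_4, a_6]$ lies in $T^{(5)}$ by (\ref{42inT5}); the pair $[a_3, a_6][a_1, a_2, a_4, a_5] + [a_3, a_5][a_1, a_2, a_4, a_6]$, after commuting each length-4 factor past the length-2 factor modulo $T^{(5)}$ (a legitimate step since the commutator of a length-2 and a length-4 left-normed commutator can be rewritten as a difference of length-6 commutators, hence lies in $T^{(5)}$), again falls in $T^{(5)}$ by (\ref{42inT5}); the remaining two terms yield $(\star)$.

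Next, I would observe that $[[a_1, a_2, a_3], [a_4, a_5, a_6]] \in T^{(5)}$, since Jacobi-expanding the outer commutator rewrites it as a signed difference of length-6 left-normed commutators in the generators. Thus the two length-3 factors of a product of length-3 commutators may be exchanged modulo $T^{(5)}$, and substituting into $(\star)$ gives
\[
[a_1, a_2, a_3][a_4, a_5, a_6] \equiv -[a_1, a_2, a_4][a_3, a_5, a_6] \pmod{T^{(5)}}.
\]

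To close the argument, I would carry out analogous expansions of the five-fold commutators $[a_1 a_2, a_3, a_4, a_5, a_6]$ and $[a_1, a_2 a_3, a_4, a_5, a_6]$, each in $T^{(5)}$. Each expansion produces, after grouping (\ref{42inT5})-type pairs, a linear combination of six products of two length-3 commutators that must lie in $T^{(5)}$. Combining these multi-term relations with suitable relabelings of $(\star)$, with the Jacobi identity applied to each length-3 factor, and with the antisymmetry in the first two slots of $[\cdot, \cdot, \cdot]$, I expect to solve a linear system forcing $[a_1, a_2, a_3][a_4, a_5, a_6] \in T^{(5)}$. The main obstacle is precisely this last step: $(\star)$ alone only permutes products of two length-3 commutators up to sign, so it cannot by itself force such a product to vanish modulo $T^{(5)}$. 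The extra relations from the $[a_1 a_2, \dots]$ and $[a_1, a_2 a_3, \dots]$ expansions are needed to break this orbit structure, and finding the right linear combination demands careful combinatorial bookkeeping.
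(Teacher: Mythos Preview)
Your derivation of $(\star)$ is correct and coincides with one of the two key congruences the paper establishes (their relation $[a_1,a_2,a_3][a_4,a_5,a_6] + [a_1,a_2,a_4][a_3,a_5,a_6] \in T^{(5)}$, obtained after you commute the factors of the second summand). Your observation that $\bigl[[a_1,a_2,a_3],[a_4,a_5,a_6]\bigr] \in T^{(5)}$ is also right and is used in the paper. The gap is your final step: you leave it as an unexecuted ``linear system'' and propose extra expansions of $[a_1a_2,a_3,a_4,a_5,a_6]$ and $[a_1,a_2a_3,a_4,a_5,a_6]$ to close it. That detour is unnecessary, and as written the argument is incomplete.

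The paper's endgame is short and worth knowing. Besides $(\star)$ (sign change under the transposition $(34)$), the paper obtains a \emph{second} sign-change relation, under $(36)$, by expanding $[u_1,y_1y_2,u_2]$ with $u_1,u_2$ commutators of length~$2$. Together with the obvious sign changes under $(12)$ and $(45)$ and the factor-swap, these transpositions generate all of $S_6$, so $[a_1,a_2,a_3][a_4,a_5,a_6]$ is fully alternating in $a_1,\dots,a_6$ modulo $T^{(5)}$. Then the Jacobi identity on the first factor forces $3P \in T^{(5)}$ (the two cyclic shifts are even permutations, hence congruent to $+P$), while the factor-swap gives $P \equiv P_\rho$ with $\rho=(14)(25)(36)$ odd, so also $P \equiv -P$, i.e.\ $2P \in T^{(5)}$. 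Since $\gcd(2,3)=1$, this yields $P \in T^{(5)}$. No further expansions are needed.

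In fact your own ingredients already suffice: combining $(\star)$ with Jacobi on the \emph{second} factor and antisymmetry recovers the missing relation. From $[a_4,a_5,a_6]+[a_5,a_6,a_4]+[a_6,a_4,a_5]=0$, multiply by $[a_1,a_2,a_3]$, apply $(\star)$ and antisymmetry to each of the last two summands to rewrite them with first factor $[a_1,a_2,a_6]$, and use Jacobi once more on $\{a_3,a_4,a_5\}$; you get $[a_1,a_2,a_3][a_4,a_5,a_6]+[a_1,a_2,a_6][a_4,a_5,a_3]\in T^{(5)}$, which is the paper's second relation. So rather than hunting through $[a_1a_2,\dots]$ expansions, simply finish with the $S_6$-alternating / $2P$-and-$3P$ argument above.
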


\begin{proof}
Let $u_1, u_2$ be commutators of length 2 in $a_i$ $(a_i \in K \langle X \rangle )$ and let $y_1, y_2 \in K \langle X \rangle$. Then $[u_1, y_1 y_2, u_2] \in T^{(5)}$. We have
\begin{align*}
& [u_1, y_1 y_2 , u_2] = \bigl[ \bigl( y_1  [u_1, y_2] + [u_1, y_1] y_2 \bigr) , u_2 \bigr]
\\
= 
& y_1 [u_1, y_2, u_2] + [y_1, u_2] [u_1, y_2] + [u_1, y_1] [y_2, u_2] + [u_1, y_1, u_2] y_2.
\end{align*}
Since $[u_1, y_1 y_2, u_2], y_1 [u_1, y_2, u_2], [u_1, y_1, u_2] y_2 \in  T^{(5)}$, we have
\[
[y_1, u_2] [u_1, y_2] + [u_1, y_1] [y_2, u_2] \in  T^{(5)}.
\]

Let $u_1 = [a_1,a_2]$, $u_2 = [a_4, a_5]$, $y_1 = a_3$, $y_2= a_6$. Then
\begin{align*}
& [y_1, u_2] [u_1, y_2] + [u_1, y_1] [y_2, u_2]
= \bigl[ a_3, [a_4, a_5] \bigr] [a_1, a_2, a_6] + [a_1, a_2, a_3] \bigl[ a_6, [a_4, a_5] \bigr]
\end{align*}
so
\begin{equation}\label{(36)}
[a_1, a_2, a_3] [a_4, a_5, a_6] + [a_1, a_2, a_6] [ a_4, a_5, a_3]
\in T^{(5)} .
\end{equation}

Further, if $u_1 = [a_1,a_2]$ then $[u_1, a_3 a_4, a_5, a_6] \in T^{(5)}$. By (\ref{comm_exp}), we have
\begin{align*}
& [u_1, a_3 a_4, a_5, a_6] = \bigl[ \bigl( a_3[u_1, a_4] + [u_1, a_3]a_4 \bigr) , a_5, a_6 \bigr]
\\
= & a_3 [u_1, a_4, a_5, a_6] + [a_3, a_6][u_1, a_4, a_5] + [a_3, a_5] [u_1, a_4, a_6]
+ [a_3, a_5, a_6][u_1, a_4] 
\\
+  & [u_1, a_3] [a_4, a_5, a_6] + [u_1, a_3, a_6][a_4, a_5]
+ [u_1, a_3, a_5][ a_4, a_6] + [u_1, a_3, a_5, a_6] a_4 .
\end{align*}
It is clear that $[u_1, a_3 a_4, a_5, a_6], a_3 [u_1, a_4, a_5, a_6], [u_1, a_3, a_5, a_6] a_4 \in  T^{(5)}$. By (\ref{42inT5}), we have
\begin{gather*}
[a_3, a_6][u_1, a_4, a_5] + [a_3, a_5] [u_1, a_4, a_6]\in T^{(5)},
\\
[u_1, a_3, a_6][a_4, a_5] + [u_1, a_3, a_5][ a_4, a_6] \in  T^{(5)}.
\end{gather*}
It follows that $[a_3, a_5, a_6][u_1, a_4] + [u_1, a_3] [a_4, a_5, a_6] \in  T^{(5)}$, that is,
\begin{equation}\label{(34)}
[a_1, a_2, a_3] [a_4, a_5, a_6] + [a_1, a_2, a_4] [a_3, a_5, a_6]
\in  T^{(5)} .
\end{equation}

Let $a_i \in K \langle X \rangle$ $( 1 \le i \le 6)$. It
is clear that
\[
[a_1, a_2, a_3] [a_4, a_5, a_6] = \mbox{sgn } \sigma [a_{\sigma
(1)}, a_{\sigma (2)}, a_{\sigma (3)}] [a_{\sigma (4)}, a_{\sigma
(5)}, a_{\sigma (6)}]
\]
if $\sigma = (12)$ or $\sigma = (45)$. By (\ref{(36)}) and
(\ref{(34)}),
\begin{align}\label{alternating}
& [a_1, a_2, a_3] [a_4, a_5, a_6]
\equiv \mbox{sgn } \sigma \ [a_{\sigma (1)}, a_{\sigma (2)}, a_{\sigma (3)}] [a_{\sigma (4)}, a_{\sigma (5)}, a_{\sigma (6)}] \pmod{T^{(5)}} 
\end{align}
if $\sigma \in \{ (36), (34), (16) \}$. It is easy to check that the transpositions $(12)$, $(45)$, $(36)$, $(34)$ and $(16)$ generate the entire group $S_6$ of the permutations on the set $\{ 1, 2, 3, 4, 5, 6 \}$. It follows that the congruence (\ref{alternating}) holds for all $\sigma \in S_6$ and all $a_i \in K \langle X \rangle$ $( 1 \le i \le 6)$.

Now we are in a position to complete the proof of the lemma. By the Jacobi identity, we have
\[
([a_1, a_2, a_3] + [a_2, a_3, a_1] + [a_3, a_1, a_2])[a_4,a_5,a_6] = 0
\]
By (\ref{alternating}), this implies
\begin{equation}\label{3}
3 \ [a_1, a_2, a_3][a_4,a_5,a_6] \in T^{(5)}.
\end{equation}
Further, it is clear that
\[
[a_1, a_2, a_3][a_4,a_5,a_6] \equiv [a_4,a_5,a_6][a_1, a_2, a_3]
\pmod{T^{(5)}} .
\]
On the other hand, by (\ref{alternating}) we have
\[
[a_1, a_2, a_3][a_4,a_5,a_6] \equiv - [a_4,a_5,a_6] [a_1, a_2,
a_3] \pmod{T^{(5)}}.
\]
It follows that
\begin{equation}\label{2}
2 \ [a_1, a_2, a_3][a_4,a_5,a_6] \in T^{(5)}.
\end{equation}
Now (\ref{3}) and (\ref{2}) imply that $[a_1, a_2, a_3][a_4,a_5,a_6] \in T^{(5)}$ for all $a_i \in K \langle
X \rangle$ $(1 \le i \le 6)$, as required. The proof of Lemma \ref{33inT5} is completed.
\end{proof}

It follows immediately from Lemma \ref{33inT5} that the polynomials of the forms (\ref{prod_x_33}) and (\ref{prod_x_43}) belong to $T^{(5)}$.

Further, by (\ref{comm_exp}) we have
\begin{align*}
& [a_5 a_6, a_4, a_7] = a_5 [a_6, a_4, a_7] + [a_5,a_4][a_6,a_7] + [a_5,a_7][a_6,a_4]
%\\
%+ & 
+ [a_5,a_4,a_7]a_6 
\\
= & [a_6, a_4, a_7]a_5 - [a_6,a_4,a_7,a_5] - [a_4,a_5][a_6,a_7]
%\\
%- & 
- [a_4,a_6] [a_5,a_7] +  \bigl[ [a_5,a_7],[a_6,a_4] \bigr] + [a_5,a_4,a_7]a_6
\end{align*}
for all $a_i \in K \langle X \rangle$. Hence,
\begin{align*}
& [a_1,a_2,a_3] [a_5 a_6,a_4,a_7] = [a_1,a_2,a_3] [a_6, a_4, a_7] a_5
%\\
%- & 
- [a_1,a_2,a_3] [a_6,a_4,a_7,a_5] 
\\
- &[a_1,a_2,a_3] \bigl( [a_4,a_5][a_6,a_7] + [a_4,a_6] [a_5,a_7] \bigr)
%\\
%+ & 
+[a_1,a_2,a_3] \bigl[ [a_5,a_7],[a_6,a_4] \bigr] + [a_1,a_2,a_3] [a_5,a_4,a_7]a_6 .
\end{align*}
By Lemma \ref{33inT5}, the polynomials
\begin{gather*}
[a_1,a_2,a_3] [a_5 a_6,a_4,a_7], \quad [a_1,a_2,a_3] [a_6, a_4, a_7] a_5, \quad  [a_1,a_2,a_3] [a_6,a_4,a_7,a_5],
\\
[a_1,a_2,a_3] \bigl[ [a_5,a_7],[a_6,a_4], \quad [a_1,a_2,a_3] [a_5,a_4,a_7]a_6
\end{gather*}
belong to $T^{(5)}$. It follows that for all $a_i \in K \langle X \rangle$
\begin{equation}
\label{322inT5}
[a_1,a_2,a_3] \bigl( [a_4,a_5][a_6,a_7] + [a_4,a_6] [a_5,a_7] \bigr) \in T^{(5)} .
\end{equation}
Thus, the polynomials of the form (\ref{prod_x_322}) belong to $T^{(5)}$.

Similarly, Lemma \ref{33inT5}, (\ref{322inT5}) and the equality
\begin{align*}
& [a_2 a_3, a_1, a_4] \bigl( [a_5,a_6][a_7,a_8] + [a_5,a_7] [a_6,a_8] \bigr)
\\
= & a_2 [a_3, a_1, a_4]  \bigl( [a_5,a_6][a_7,a_8] + [a_5,a_7] [a_6,a_8] \bigr)
\\
- & \bigl( [a_1,a_2][a_3,a_4] + [a_1,a_3] [a_2,a_4] \bigr)  \bigl( [a_5,a_6][a_7,a_8] + [a_5,a_7] [a_6,a_8] \bigr)
\\
+ & \bigl[ [a_2,a_4],[a_3,a_1] \bigr]  \bigl( [a_5,a_6][a_7,a_8] + [a_5,a_7] [a_6,a_8] \bigr)
\\
+ & \bigl( a_3 [a_2,a_1,a_4] +  [a_2,a_1,a_4, a_3]\bigr)  \bigl( [a_5,a_6][a_7,a_8] + [a_5,a_7] [a_6,a_8] \bigr)
\end{align*}
imply that for all $a_i \in K \langle X \rangle$
\[
\bigl( [a_1,a_2][a_3,a_4] + [a_1,a_3] [a_2,a_4] \bigr)  \bigl( [a_5,a_6][a_7,a_8] + [a_5,a_7] [a_6,a_8] \bigr) \in T^{(5)} .
\]
It follows that the polynomials of the form (\ref{prod_x_2222}) belong to $T^{(5)}$.

One can check in a similar way using the equalities
\begin{align*}
[a_2a_3, a_1, a_4, a_5, a_6] = & \Big[ \Big( a_2 [a_3, a_1, a_4]  - \bigl( [a_1,a_2][a_3,a_4] + [a_1,a_3] [a_2,a_4] \bigr)
\\
+ & \bigl[ [a_2,a_4],[a_3,a_1] \bigr]  +  [a_2,a_1,a_4] a_3 \Big)  , a_5,  a_6 \Big]
\end{align*}
and
\begin{align*}
& [a_2a_3, a_1, a_4, a_5] [a_6, a_7] + [a_2a_3, a_1, a_4, a_6] [a_5, a_7]
\\
= & \Big[ \Big( a_2 [a_3, a_1, a_4]  - \bigl( [a_1,a_2][a_3,a_4] + [a_1,a_3] [a_2,a_4] \bigr)
%\\
%+ & 
+ \bigl[ [a_2,a_4],[a_3,a_1] \bigr]  +  [a_2,a_1,a_4] a_3 \Big)  , a_5 \Big] [ a_6, a_7]
\\
+ & \Big[ \Big( a_2 [a_3, a_1, a_4]  - \bigl( [a_1,a_2][a_3,a_4] + [a_1,a_3] [a_2,a_4] \bigr)
%\\
%+ & 
+ \bigl[ [a_2,a_4],[a_3,a_1] \bigr]  +  [a_2,a_1,a_4] a_3 \Big)  , a_6 \Big] [ a_5, a_7] ,
\end{align*}
Lemma \ref{33inT5} and (\ref{42inT5}) that the polynomials of the forms (\ref{comm_x_2211}) and (\ref{prod_x_2212}) belong to $T^{(5)}$ as well.

Thus, all polynomials of the forms (\ref{comm_x_5})--(\ref{prod_x_2222}) are contained in $T^{(5)}$. It follows that $I \subseteq T^{(5)}$.

%%%%%%%%%%%%%%%%%%%%%%%%%%%%%%%%%%%%%%%%%%%%%
%%%%%%%%%%%%%%%%%%%%%%%%%%%%%%%%%%%%%%%%%%%%%
%%%%%%%%%%%%%%%%%%%%%%%%%%%%%%%%%%%%%%%%%%%%%

\bigskip
Now we prove that $T^{(5)} \subseteq I$. Since the ideal $T^{(5)}$ is generated by the polynomials $[a_{1} , a_{2} , a_{3}, a_{4} , a_{5} ]$ where $a_i \in K\left\langle X\right\rangle$ for all $i$, it suffices to check that, for all $a_i \in K\left\langle X\right\rangle$, the polynomial
\begin{equation}
\label{comm_a_5}
[a_{1} , a_{2} , a_{3}, a_{4} , a_{5}]
\end{equation}
belongs to $I$. Clearly, one may assume without loss of generality that all $a_i$ in (\ref{comm_a_5}) are monomials.

In order to prove that each polynomial of the form (\ref{comm_a_5}) belongs to $I$ we will prove that, for all monomials $a_i \in K\left\langle X\right\rangle$, the following polynomials belong to $I$ as well:
\begin{equation}
\label{prod_a_2222}
\bigl([a_1 , a_2][a_3 , a_4 ]+[a_1 , a_3][a_2 , a_4 ]\bigr) \bigl([a_5,a_6][a_7 , a_8]+[a_5 , a_7][a_6 , a_8 ]\bigr) ;
\end{equation}
\begin{equation}
\label{prod_a_43}
[a_{1}, a_{2}, a_{3}, a_{4} ][a_{5}, a_{6}, a_{7} ] ;
\end{equation}
\begin{equation}
\label{prod_a_322}
[a_{1} , a_{2} , a_{3} ]\bigl([a_4,a_5][a_{6} , a_7]+[a_{4} , a_6][a_5 , a_{7} ]\bigr) ;
\end{equation}
\begin{equation}
\label{prod_a_2212}
\Big[ \big( [a_{1},a_{2}][a_{3},a_{4}]+[a_{1},a_{3}][a_{2},a_{4}] \big) ,a_{5} \Big] [a_{6},a_{7}]
\end{equation}
\[
+ \Big[ \big( [a_{1},a_{2}][a_{3},a_{4}]+[a_{1},a_{3}][a_{2},a_{4}] \big) ,a_{6} \Big] [a_{5},a_{7}] ;
\]
\begin{equation}
\label{comm_a_2211}
\Big[ \big( [a_{1},a_{2}][a_{3},a_{4}]+[a_{1},a_{3}][a_{2},a_{4}] \big) , a_{5},a_{6} \Big] ;
\end{equation}
\begin{equation}
\label{prod_a_33}
[a_{1} , a_{2} , a_{3} ][a_{4} , a_{5} , a_{6} ] ;
\end{equation}
\begin{equation}
\label{prod_a_42}
[a_{1}, a_{2}, a_{3}, a_{4} ][a_{5}, a_6 ] + [a_{1} , a_{2} , a_{3} , a_{5} ][a_{4}, a_6 ] .
\end{equation}
The proof is by induction on the degree $m = \deg f$ of a polynomial $f$ that is of one of the forms (\ref{comm_a_5})--(\ref{prod_a_42}). It is clear that for such $f$ we have $m  \ge 5$. If $m = 5$ then $f $ is of the form (\ref{comm_a_5}) with all monomials $a_i$ of degree $1$. Hence, $f$ is of the form (\ref{comm_x_5}) so $f \in I$. This establishes the base of the induction.

To prove the induction step suppose that $m = \deg f >5$ and that all polynomials of the forms (\ref{comm_a_5})--(\ref{prod_a_42}) of degree less than $m$ belong to $I$.

We need the following.

\begin{lemma}
\label{lemma_422}
Let $b_1, b_2, \dots ,b_8 \in K \langle X \rangle$ be monomials such that $\sum_{i=1}^8 \deg b_i = m$. Suppose that all polynomials of the forms (\ref{comm_a_5})--(\ref{prod_a_42}) of degree less than $m$ belong to $I$. Then
\[
[b_1,b_2,b_3,b_4] \bigl([b_5,b_6][b_7 , b_8] + [b_5 , b_7][b_6 , b_8 ]\bigr) \in I.
\]
\end{lemma}

\begin{proof}
We have
\begin{align}
\label{case1_234_422}
& [b_1,b_2,b_3,b_4] \bigl([b_5,b_6][b_7 , b_8] + [b_5 , b_7][b_6 , b_8 ]\bigr)
=  [b_1,b_2,b_3] \bigl([b_5,b_6][b_7 , b_8] + [b_5 , b_7][b_6 , b_8 ]\bigr) b_4
\\
- &
b_4 [b_1,b_2,b_3] \bigl([b_5,b_6][b_7 , b_8] + [b_5 , b_7][b_6 , b_8 ]\bigr)
-
[b_1,b_2,b_3] \Big[ \bigl([b_5,b_6][b_7 , b_8] + [b_5 , b_7][b_6 , b_8 ]\bigr) ,b_4 \Big] .
\nonumber
\end{align}
Here
\[
[b_1,b_2,b_3] \bigl([b_5,b_6][b_7 , b_8] + [b_5 , b_7][b_6 , b_8 ]\bigr) \in I
\]
because it is a polynomial of type (\ref{prod_a_322}) of degree less than $m$. It follows that
\begin{equation}
\label{case1_234_422_1}
 [b_1,b_2,b_3] \bigl([b_5,b_6][b_7 , b_8] + [b_5 , b_7][b_6 , b_8 ]\bigr) b_4 - 
b_4 [b_1,b_2,b_3] \bigl([b_5,b_6][b_7 , b_8] + [b_5 , b_7][b_6 , b_8 ]\bigr) \in I.
\end{equation}
On the other hand, 
\begin{align*}
 \bigl[ [b_5,b_6][b_7 , b_8]  ,b_4 \bigr]  =  & [b_5,b_6, b_4][b_7 , b_8] +  [b_5,b_6][b_7 , b_8 ,b_4 ]  
 \\
 =  & [b_5,b_6, b_4][b_7 , b_8] + [b_7 , b_8 ,b_4 ] [b_5,b_6] - \bigl[ [b_7 , b_8 ,b_4 ], [b_5,b_6] \bigr]
 \\
\equiv &  [b_5,b_6, b_4][b_7 , b_8] + [b_7 , b_8 ,b_4 ] [b_5,b_6] \pmod{I}
\end{align*}
because 
\[
\bigl[ [b_7 , b_8 ,b_4 ], [b_5,b_6] \bigr] = [b_7 , b_8 ,b_4 , b_5,b_6]  - [b_7 , b_8 ,b_4 , b_6,b_5] 
\]
and $ [b_7 , b_8 ,b_4 , b_5,b_6], [b_7 , b_8 ,b_4 , b_6,b_5] \in I$ since these two polynomials are of type (\ref{comm_a_5}) of degree less than $m$. Similarly, 
\[
 \bigl[ [b_5,b_7][b_6 , b_8]  ,b_4 \bigr] \equiv  [b_5,b_7, b_4][b_6 , b_8] + [b_6 , b_8 ,b_4 ] [b_5,b_7] \pmod{I} .
\]
It follows that
\begin{align}
\label{case1_234_422_21}
& [b_1,b_2,b_3] \Big[ \bigl([b_5,b_6][b_7 , b_8] + [b_5 , b_7][b_6 , b_8 ]\bigr) ,b_4 \Big] 
\equiv [b_1,b_2,b_3] \bigl(  [b_5,b_6, b_4][b_7 , b_8] 
\\
+ & [b_7 , b_8 ,b_4 ] [b_5,b_6]  + [b_5,b_7, b_4][b_6 , b_8] + [b_6 , b_8 ,b_4 ] [b_5,b_7] \bigr) \pmod{I} .
\nonumber
\end{align}
Note that if $k, \ell \in \{5,6,7,8\}$, $k \ne \ell$ then
\begin{equation}
\label{case1_234_422_22}
[b_1,b_2,a_3]  [b_{k},b_{\ell}, b_4] \in I
\end{equation}
because it is a polynomial of type (\ref{prod_a_33}) of degree less than $m$. It follows from (\ref{case1_234_422_21}) and (\ref{case1_234_422_22}) that 
\begin{equation}
\label{case1_234_422_2}
[b_1,b_2,b_3] \Big[ \bigl([b_5,b_6][b_7 , b_8] + [b_5 , b_7][b_6 , b_8 ]\bigr) ,b_4 \Big]  \in I
\end{equation}
and, therefore, by (\ref{case1_234_422}), (\ref{case1_234_422_1}) and (\ref{case1_234_422_2}),
\[
[b_1,b_2,b_3,b_4] \bigl([b_5,b_6][b_7 , b_8] + [b_5 , b_7][b_6 , b_8 ]\bigr) \in I,
\]
as required.
\end{proof}

Since $\bigl[ [b_1,b_2],[b_3,b_4] \bigr] = [b_1,b_2,b_3,b_4] - [b_1,b_2,b_4,b_3]$,
we have the following:
\begin{corollary}
\label{corollary_422}
Suppose that $b_1, b_2, \dots ,b_8 $ are as in Lemma \ref{lemma_422}. Then
\[
\bigl[ [b_1,b_2],[b_3,b_4] \bigr] \bigl([b_5,b_6][b_7 , b_8] + [b_5 , b_7][b_6 , b_8 ]\bigr) \in I.
\]
\end{corollary}

It is clear that the following lemma holds.
\begin{lemma}
\label{comm-comm}
Let $b_1, b_2, b_3,b_4 ,b_5 \in K \langle X \rangle$ be monomials such that $\sum_{i=1}^5 \deg b_i < m$. Suppose that all polynomials of the forms (\ref{comm_a_5})--(\ref{prod_a_42}) of degree less than $m$ belong to $I$. Then
\[
b_1[b_2,b_3,b_4,b_5] \equiv [b_2,b_3,b_4,b_5] b_1 \pmod{I}  \quad \mbox{\rm and} \quad [b_1,b_2,b_3][b_4,b_5] \equiv [b_4,b_5] [b_1, b_2, b_3] \pmod{I} .
\]
\end{lemma}

\medskip
Now we prove the induction step. We split the proof into 8 cases.

%%%%%%%%%%%%%%%%%%%%%%%%%%%%%%%%%%%%%%%%%%%%%
%%%%%%%%%%%%%%%%%%%%%%%%%%%%%%%%%%%%%%%%%%%%%

\textbf{Case 1.} Suppose that $f$ is of the form (\ref{prod_a_2222}),
\begin{align*}
f & = f(a_1, a_2, \dots , a_8) = \bigl([a_1 , a_2] [a_3 , a_4 ] + [a_1 , a_3][a_2 , a_4 ]\bigr) \bigl([a_5,a_6][a_7 , a_8] + [a_5 , a_7][a_6 , a_8 ]\bigr) .
\end{align*}
If $\deg f = 8$ then each  monomial $a_i$ is of degree $1$ so $f$ is of the form (\ref{prod_x_2222}) and, therefore, $f \in I$. If $\deg f > 8$ then, for some $i$, $1\le i \le 8$, we have $a_i=a_i' a_i''$ where $\deg a_i'$, $\deg a_i''< \deg a_i$. We claim that to check that $f \in I$ one may assume without loss of generality that $i = 1 $, that is, $a_1 = a_1' a_1''$.

Indeed,  by the induction hypothesis,
\[
\Big[ \bigl([a_1 , a_2] [a_3 , a_4 ] + [a_1 , a_3][a_2 , a_4 ]\bigr) , a_k, a_{\ell} \Big] \in I
\]
if $5 \le k , \ell \le 8$, $k \ne \ell $, because the commutator above is a polynomial of type (\ref{comm_a_2211}) and of degree less than $m = \deg f$. Hence,
\begin{align*}
 & \Big[ \bigl([a_1 , a_2] [a_3 , a_4 ] + [a_1 , a_3][a_2 , a_4 ]\bigr) , [a_k, a_{\ell}] \Big]
\\
= & \Big[ \bigl([a_1 , a_2] [a_3 , a_4 ] + [a_1 , a_3][a_2 , a_4 ]\bigr) , a_k, a_{\ell} \Big]
- \Big[ \bigl([a_1 , a_2] [a_3 , a_4 ] + [a_1 , a_3][a_2 , a_4 ]\bigr) , a_{\ell}, a_k \Big] \in I .
\end{align*}
It follows that 
\begin{align*}
& \bigl([a_1 , a_2] [a_3 , a_4 ] + [a_1 , a_3][a_2 , a_4 ]\bigr) \bigl([a_5,a_6][a_7 , a_8] + [a_5 , a_7][a_6 , a_8 ]\bigr)
\\
\equiv & \bigl([a_5,a_6][a_7 , a_8] + [a_5 , a_7][a_6 , a_8 ]\bigr) \bigl([a_1 , a_2] [a_3 , a_4 ] + [a_1 , a_3][a_2 , a_4 ]\bigr) \pmod{I},
\end{align*}
that is,
\begin{equation}
\label{case1_5678}
f(a_1,a_2,a_3,a_4, a_5,a_6,a_7,a_8) \equiv f(a_5,a_6,a_7,a_8,a_1,a_2,a_3,a_4) \pmod{I} .
\end{equation}

Further, let $g = \bigl([a_5,a_6][a_7 , a_8] + [a_5 , a_7][a_6 , a_8 ]\bigr) $; then, by Corollary \ref{corollary_422}, we have
\begin{equation*}
\label{case1_234_22g}
\bigl[ [a_1,a_2], [a_3, a_4] \bigr] g \equiv 0 \pmod{I}.
\end{equation*}
It follows that
\begin{align*}
& \bigl( [a_1,a_2] [a_3,a_4] + [a_1,a_3][a_2,a_4] \bigr) g \equiv \bigl( [a_2,a_1] [a_4,a_3] + [a_2,a_4][a_1,a_3] \bigr) g \pmod{I}
\\
\equiv & \bigl( [a_3,a_4] [a_1,a_2] + [a_3,a_1][a_4,a_2] \bigr) g \pmod{I} \equiv    \bigl( [a_4,a_3] [a_2,a_1] + [a_4,a_2][a_3,a_1] \bigr) g \pmod{I} ,
\end{align*}
that is, 
\begin{align}
\label{case1_234}
& f(a_1,a_2,a_3,a_4,a_5, \dots ) \equiv f(a_2,a_1,a_4,a_3,a_5, \dots ) \pmod{I} 
\\
\equiv & f(a_3,a_4,a_1,a_2,a_5, \dots ) \pmod{I} \equiv f(a_4,a_3,a_2,a_1, a_5, \dots ) \pmod{I}.
\nonumber
\end{align}

It follows from (\ref{case1_5678}) and (\ref{case1_234}) that if $a_i = a_i' a_i''$ for some $i$, $1 \le i \le 8$, then we may assume without loss of generality that $i=1$, that is, $a_1 = a_1' a_1''$, as claimed. 

Further, we have
\[
[a_1'a_1'', a_i] = a_1'[a_1'', a_i] + [a_1', a_i] a_1'' = a_1' [a_1'', a_i] + a_1'' [a_1', a_i] + [a_1', a_i, a_1'']
\]
where $i \in \{ 2,3\}$ so
\begin{align}
\label{case1}
f & = \big( [a_1'a_1'', a_2][a_3 , a_4 ] + [a_1'a_1'' , a_3][a_2 , a_4 ] \big) g
= a_1' \big( [a_1'', a_2][a_3 , a_4 ] + [a_1'', a_3][a_2 , a_4 ] \big) g
\\
& + a_1'' \big( [a_1', a_2][a_3 , a_4 ] + [a_1', a_3][a_2 , a_4 ] \big) g
+ \big( [a_1', a_2,a_1''][a_3 , a_4 ] + [a_1', a_3,a_1''][a_2 , a_4 ] \big) g .
\nonumber
\end{align}
By the induction hypothesis,
\[
\big( [b, a_2][a_3 , a_4 ] + [b, a_3][a_2 , a_4 ] \big) g =
\big( [b, a_2][a_3 , a_4 ] + [b, a_3][a_2 , a_4 ] \big) \big( [a_5,a_6][a_7 , a_8] + [a_5 , a_7][a_6 , a_8 ] \big) \in I
\]
if $b \in \{ a_1',a_1'' \}$ so
\begin{align}
\label{case1_1}
& a_1' \big( [a_1'', a_2][a_3 , a_4 ] + [a_1'', a_3][a_2 , a_4 ] \big) g 
+ a_1'' \big( [a_1', a_2][a_3 , a_4 ] + [a_1', a_3][a_2 , a_4 ] \big) g
\in I.
\end{align}
On the other hand, by Lemma \ref{comm-comm},
\begin{equation}
\label{case1_32-23}
\big( [a_1', a_2,a_1''][a_3 , a_4 ] + [a_1', a_3,a_1''][a_2 , a_4 ] \big) g \equiv \big( [a_3 , a_4 ] [a_1', a_2,a_1''] + [a_2 , a_4 ] [a_1', a_3,a_1''] \big) g \pmod{I} .
\end{equation}
By the induction hypothesis, we have
\[
 [a_1', a_2,a_1''] g =  [a_1', a_2,a_1''] \big( [a_5,a_6][a_7 , a_8]+[a_5 , a_7][a_6 , a_8 ] \big) \in I
\]
because this is a polynomial of type (\ref{prod_a_322}) of degree less than $m$. Hence, $[a_3 , a_4 ] [a_1', a_2,a_1''] g \in I$ and, similarly,  $[a_2 , a_4 ] [a_1', a_3,a_1''] g \in I$. By (\ref{case1_32-23}), we have
\begin{equation}
\label{case1_2}
\big( [a_1', a_2,a_1''][a_3 , a_4 ] + [a_1', a_3,a_1''][a_2 , a_4 ] \big) g \in I
\end{equation}
so, by  (\ref{case1}), (\ref{case1_1}) and (\ref{case1_2}), $f \in I$.

Thus, each polynomial
\[
f = \big( [a_1 , a_2] [a_3 , a_4 ] + [a_1 , a_3][a_2 , a_4 ] \big) \big( [a_5,a_6][a_7 , a_8] + [a_5 , a_7][a_6 , a_8 ] \big)
\]
of the form (\ref{prod_a_2222}) of degree $m$ belongs to $I$.

%%%%%%%%%%%%%%%%%%%%%%%%%%%%%%%%%%%%%%%%%%%%%%%%%
%%%%%%%%%%%%%%%%%%%%%%%%%%%%%%%%%%%%%%%%%%%%%%%%%

\medskip
\textbf{Case 2.} Suppose now that $f$ is of the form (\ref{prod_a_43}), 
\[
f = [a_1,a_2,a_3,a_4] [a_5,a_6,a_7].
\] 
If $\deg f = 7$ then $f$ is of the form (\ref{prod_x_43}) so $f \in I$. If $\deg f>7$ then for some $i$, $1\le i \le 7$, we have $a_i=a_i'a_i''$, where $\deg a_i'$, $\deg a_i'' < \deg a_i$. 

Suppose that $a_1 = a_1' a_1''$. By (\ref{comm_exp}), we have
\begin{align*}
\label{case4_1_1}
f = & [a_1' a_1'',a_2,a_3,a_4] [a_5,a_6,a_7] 
\\
= & \bigl( a_1' [a_1'',a_2,a_3,a_4] + [a_1',a_2] [a_1'', a_3,a_4] + [a_1', a_3] [a_1'', a_2,a_4] 
+   [a_1', a_4][a_1'',a_2,a_3]  
\nonumber
\\
+ & [a_1',a_2,a_3][a_1'',a_4]  +  [a_1',a_2,a_4][a_1'',a_3] + [a_1',a_3,a_4][a_1'',a_2] + [a_1',a_2,a_3,a_4]a_1'' \bigr) [a_5,a_6,a_7]
\nonumber
\end{align*}
so, by Lemma \ref{comm-comm},
\begin{align}
f \equiv & \bigl( a_1' [a_1'',a_2,a_3,a_4] + [a_1',a_2] [a_1'', a_3,a_4] + [a_1', a_3] [a_1'', a_2,a_4] 
+   [a_1', a_4][a_1'',a_2,a_3]  + [a_1'',a_4][a_1',a_2,a_3]
\\
  + & [a_1'',a_3] [a_1',a_2,a_4] + [a_1'',a_2][a_1',a_3,a_4] + a_1''[a_1',a_2,a_3,a_4]  \bigr) [a_5,a_6,a_7]  \pmod{I}.
\nonumber
\end{align}
By the induction hypothesis, we have $[b,a_2,a_3,a_4][a_5,a_6,a_7] \in I$ if $b \in \{ a_1', a_1'' \}$ so 
\begin{equation}
\label{case4_1_11}
\bigl( a_1' [a_1'',a_2,a_3,a_4] + a_1''[a_1',a_2,a_3,a_4] \bigr) [a_5,a_6,a_7] \in I.
\end{equation}
Further, by the induction hypothesis, $[b,a_k,a_{\ell}][a_5,a_6,a_7] \in I$ if $b \in \{ a_1', a_1''\}$ and $2 \le k, \ell \le 4$, $k \ne \ell$, so
\begin{align}
\label{case4_1_12}
& \bigl( [a_1',a_2] [a_1'', a_3,a_4] + [a_1', a_3] [a_1'', a_2,a_4] 
+   [a_1', a_4][a_1'',a_2,a_3]  +  [a_1'',a_4][a_1',a_2,a_3] 
\\
 & +   [a_1'',a_3] [a_1',a_2,a_4] + [a_1'',a_2][a_1',a_3,a_4] \bigr) [a_5,a_6,a_7] \in I
\nonumber
\end{align}
It follows from (\ref{case4_1_1}), (\ref{case4_1_11}) and  (\ref{case4_1_12})  that $f \in I$ if $a_1 = a_1'a_1''$. Similarly, $f \in I$ if $a_2 = a_2'a_2''$.

Suppose that $a_3 = a_3' a_3''$. We have
\begin{align*}
\label{case4_3_1}
f = & [a_1,a_2,a_3'a_3'',a_4] [a_5,a_6,a_7] = \bigl[ a_3' [a_1,a_2,a_3''] +[a_1,a_2,a_3']a_3'' ,a_4 \bigr] [a_5,a_6,a_7]
\\
= & \bigl( a_3'[a_1,a_2,a_3'',a_4] + [a_3',a_4][a_1,a_2,a_3''] + [a_1,a_2,a_3'] [a_3'',a_4]  + [a_1,a_2,a_3',a_4] a_3''   \bigr) [a_5,a_6,a_7]
\nonumber
\\
\equiv & \bigl( a_3'[a_1,a_2,a_3'',a_4] + [a_3',a_4][a_1,a_2,a_3'']  +  [a_3'',a_4][a_1,a_2,a_3']  + a_3''[a_1,a_2,a_3',a_4]  \bigr) [a_5,a_6,a_7] \pmod{I} .
\end{align*}
By the induction hypothesis, $[a_1,a_2,b,a_4] [a_5,a_6,a_7] \in I$ and $[a_1,a_2,b] 
[a_5,a_6,a_7] \in I$ if $b \in \{ a_3', a_3'' \}$ so
\[
\bigl( a_3'[a_1,a_2,a_3'',a_4] + [a_3',a_4][a_1,a_2,a_3''] + a_3''[a_1,a_2,a_3',a_4] +  
[a_3'',a_4][a_1,a_2,a_3'] \bigr) [a_5,a_6,a_7] \in I. 
\]
It follows that $f \in I$ if $a_3 = a_3' a_3''$.

Suppose that $a_4 = a_4'a_4''$. Then
\begin{align*}
f = & [a_1,a_2,a_3,a_4' a_4''] [a_5,a_6,a_7] = \bigl( a_4'[a_1,a_2,a_3,a_4''] + [a_1,a_2,a_3,a_4']  a_4''  \bigr) [a_5,a_6,a_7]
\\
\equiv & \bigl( a_4'[a_1,a_2,a_3,a_4''] + a_4'' [a_1,a_2,a_3,a_4'] \bigr) [a_5,a_6,a_7]  \pmod{I}.
\end{align*}
By the induction hypothesis,  $[a_1,a_2,a_3,b] [a_5,a_6,a_7] \in I$ if $b \in \{ a_4', a_4'' \}$. It follows that $f \in I$ if $a_4 = a_4'a_4''$.

Now suppose that $a_5 = a_5'a_5''$. Then
\begin{align*}
f = & [a_1,a_2,a_3,a_4] [a_5' a_5'',a_6,a_7]  
\\
= & [a_1,a_2,a_3,a_4] \bigl( a_5' [a_5'',a_6,a_7] +   [a_5',a_7] [a_5'',a_6] + [a_5',a_6][ a_5'',a_7] + [a_5',a_6,a_7]a_5'' \bigr)
\\
\equiv & a_5' [a_1,a_2,a_3,a_4]  [a_5'',a_6,a_7] +  [a_1,a_2,a_3,a_4] \bigl( [a_5',a_7] [a_5'',a_6] + [a_5',a_6][ a_5'',a_7] \bigr)
\\
 + & [a_1,a_2,a_3,a_4] [a_5',a_6,a_7]a_5'' \pmod{I} .
\end{align*}
Here $a_5' [a_1,a_2,a_3,a_4]  [a_5'',a_6,a_7] \in I$ and $[a_1,a_2,a_3,a_4] [a_5',a_6,a_7]a_5'' \in I$ by the induction hypothesis and 
\[
 [a_1,a_2,a_3,a_4] \bigl( [a_5',a_7] [a_5'',a_6] + [a_5',a_6][ a_5'',a_7] \bigr) = - [a_1,a_2,a_3,a_4] \bigl( [a_5',a_7] [a_6, a_5''] + [a_5',a_6][ a_7, a_5''] \bigr) \in I
\]
by Lemma \ref{lemma_422}. Therefore, $f \in I$ if $a_5 = a_5'a_5''$. Similarly, $f \in I$ if $a_6 = a_6'a_6''$. 

Finally, suppose that $a_7=a_7'a_7''$. We have
\begin{align*}
f = & [a_1,a_2,a_3,a_4] [a_5,a_6,a_7' a_7'']  = [a_1,a_2,a_3,a_4] a_7' [a_5,a_6, a_7'']  +[a_1,a_2,a_3,a_4] [a_5,a_6,a_7' ] a_7''
\\
\equiv & a_7' [a_1,a_2,a_3,a_4]  [a_5,a_6, a_7'']  + [ a_1,a_2,a_3,a_4] [a_5,a_6,a_7' ] a_7'' \pmod{I}.
\end{align*}
Here $a_7' [a_1,a_2,a_3,a_4]  [a_5,a_6, a_7''] \in I$ and $[ a_1,a_2,a_3,a_4] [a_5,a_6,a_7' ] a_7'' \in I$ by the induction hypothesis so $f \in I$ if $a_7 = a_7' a_7''$.

Thus, each polynomial $f$ of the form  (\ref{prod_a_43}) and of degree $m$ belongs to $I$.

%%%%%%%%%%%%%%%%%%%%%%%%%%%%%%%%%%%%%%%%%%%%%%%%
%%%%%%%%%%%%%%%%%%%%%%%%%%%%%%%%%%%%%%%%%%%%%%%%

\medskip
\textbf{Case 3.} Suppose that $f$ is of the form (\ref{prod_a_322}),
\[
f = f(a_1, \dots ,a_7) = [a_{1} , a_{2} , a_{3} ]\bigl([a_4,a_5][a_{6} , a_7]+[a_{4} , a_6][a_5 , a_{7} ]\bigr) .
\]
If $\deg f=7$  then each monomial $a_i$ is of degree $1$ so $f$ is of the form (\ref{prod_x_322}) and, therefore, $f \in I$. If $\deg f>7$ then for some $i$, $1\le i \le 7$, we have $a_i=a_i'a_i''$, where $\deg a_i'$, $\deg a_i'' < \deg a_i$. We claim that to check that $f \in I$ one may assume without loss of generality that $i \in \{ 1, 3,4 \} $.

Indeed, it is clear that 
\begin{equation}
\label{case2_1234}
f(a_1,a_2,a_3, \dots ,a_7) = - f(a_2,a_1,a_3, \dots ,a_7).
\end{equation} 
On the other hand, 
\begin{align*}
& [a_{1} , a_{2} , a_{3} ] \bigl[ [a_4,a_5], [a_{6} , a_7] \bigr]  =  [a_{1} , a_{2} , a_{3} ] \bigl( [a_4,a_5, a_{6} , a_7] - [a_4,a_5, a_7 , a_6] \bigr) 
\\
\equiv & \bigl( [a_4,a_5, a_{6} , a_7] - [a_4,a_5, a_7 , a_6] \bigr) [a_{1} , a_{2} , a_{3} ] \pmod{I}  \equiv 0 \pmod{I}
\end{align*}
because, by Case 2, the polynomials of the form (\ref{prod_a_43}) of degree $m$ belong to $I$. It follows that 
\begin{align*}
[a_{1} , a_{2} , a_{3} ]\bigl( [a_4,a_5][a_6 , a_7]+[a_{4} , a_6][a_5 , a_{7} ]\bigr) & \equiv [a_{1} , a_{2} , a_{3} ]\bigl([a_5,a_4][a_7 , a_6]+ [a_5 , a_{7} ] [a_{4} , a_6]\bigr) \pmod{I}
\\
& \equiv  [a_{1} , a_{2} , a_{3} ]\bigl([a_{6} , a_7][a_4,a_5] + [a_6 , a_4][a_7 , a_5 ]\bigr) \pmod{I} 
\\
& \equiv [a_{1} , a_{2} , a_{3} ]\bigl( [a_7 , a_6][a_5,a_4] +[a_7 , a_5 ][a_6 , a_4] \bigr) \pmod{I} ,
\end{align*}
that is, 
\begin{align}
\label{case2_5678}
&f(a_1,a_2,a_3,a_4,a_5,a_6,a_7) \equiv f(a_1,a_2,a_3, a_5,a_4,a_7,a_6) \pmod{I}
\\
\equiv & f(a_1,a_2,a_3,a_6,a_7,a_4,a_5) \pmod{I} \equiv f(a_1,a_2,a_3,a_7,a_6,a_5,a_4) \pmod{I}.
\nonumber
\end{align}
It follows from (\ref{case2_1234}) and (\ref{case2_5678}) that one may assume without loss of generality that $i \in \{ 1, 3,4 \} $, as claimed.

Suppose first that $a_{1}=a_{1}'a_{1}''$. Let $g = \bigl([a_4,a_5][a_{6} , a_7]+[a_{4} , a_6][a_5 , a_{7} ]\bigr).$ By (\ref{comm_exp}), we have
\begin{align}
\label{case2_1}
f = & [a_{1}' a_1'' , a_{2} , a_{3} ] g
\\
= & \bigl( a_1' [a_1'',a_2,a_3] 
+ [a_1', a_3] [a_1'', a_2] + [a_1' , a_2] [a_1'', a_3]  
+ a_1'' [a_1' , a_2, a_3] 
+ [a_1' , a_2, a_3, a_1''] \bigr) g .
\nonumber
\end{align}
By the induction hypothesis, 
\[
[b,a_2,a_3] g = [b,a_2,a_3] \bigl([a_4,a_5][a_{6} , a_7]+[a_{4} , a_6][a_5 , a_{7} ]\bigr) \in I
\] 
if $b \in \{ a_1', a_1'' \}$ so 
\begin{equation}
\label{case2_11}
\bigl( a_1' [a_1'',a_2,a_3] + a_1'' [a_1' , a_2, a_3] \bigr) g \in I.
\end{equation}
Further,  
\begin{align}
\label{case2_12}
& \bigl( [a_1', a_3] [a_1'', a_2] + [a_1' , a_2] [a_1'', a_3] \bigr) g = - \bigl( [a_1', a_3] [a_2, a_1''] + [a_1' , a_2] [a_3, a_1''] \bigr) g
\\
= & - \bigl( [a_1', a_3] [a_2, a_1''] + [a_1' , a_2] [a_3, a_1'']  \bigr)  \bigl([a_4,a_5][a_{6} , a_7]+[a_{4} , a_6][a_5 , a_{7} ]\bigr) \in I
\nonumber
\end{align}
because this polynomial is (up to sign) of the form (\ref{prod_a_2222}) and of degree $m$ so, by Case 1, it belongs to $I$. Finally, by Lemma \ref{lemma_422},
\begin{equation}
\label{case2_13}
[a_1', a_2, a_3, a_1''] g = [a_1' , a_2, a_3, a_1''] \bigl([a_4,a_5][a_{6} , a_7]+[a_{4} , a_6][a_5 , a_{7} ]\bigr)  \in I.
\end{equation}
It follows from (\ref{case2_1}), (\ref{case2_11}), (\ref{case2_12}) and (\ref{case2_13}) that $f \in I$ if $a_1 = a_1'a_1''$. 

Suppose that $a_3=a_3'a_3''$. Then
\begin{align*}
f = & [a_{1} , a_{2} , a_{3}' a_3'' ] \bigl([a_4,a_5][a_{6} , a_7]+[a_{4} , a_6][a_5 , a_{7} ]\bigr) 
\\ 
= & \bigl( a_3' [a_1,a_2,a_3''] + a_3'' [a_1,a_2,a_3'] + [a_1,a_2,a_3',a_3''] \bigr) \bigl([a_4,a_5][a_{6} , a_7]+[a_{4} , a_6][a_5 , a_{7} ]\bigr)
\end{align*}
where $\bigl( a_3' [a_1,a_2,a_3''] + a_3'' [a_1,a_2,a_3'] \bigr) \bigl([a_4,a_5][a_{6} , a_7]+[a_{4} , a_6][a_5 , a_{7} ]\bigr) \in I$ by the induction hypothesis and $[a_1,a_2,a_3',a_3''] \bigl([a_4,a_5][a_{6} , a_7]+[a_{4} , a_6][a_5 , a_{7} ]\bigr) \in I$ by Lemma \ref{lemma_422}. It follows that $f \in I$ if $a_3 = a_3' a_3''$.

Finally, suppose that $a_4 = a_4' a_4''$. We have
\begin{align*}
f = & [a_{1} , a_{2} , a_3 ] \bigl([a_4' a_4'',a_5][a_{6} , a_7]+[a_4' a_4'' , a_6][a_5 , a_{7} ]\bigr) 
\\
= & [a_{1} , a_{2} , a_3 ] \bigl( a_4' [a_4'',a_5][a_{6} , a_7] + [a_4' ,a_5] a_4'' [a_{6} , a_7] + a_4' [a_4'' , a_6][a_5 , a_{7} ] + [a_4'  , a_6] a_4''  [a_5 , a_{7} ]  \bigr) 
\\
= & a_4' [a_{1} , a_{2} , a_3 ] \bigl(  [a_4'',a_5][a_{6} , a_7] + [a_4'' , a_6][a_5 , a_{7} ]  \bigr) + [a_{1} , a_{2} , a_3, a_4'  ] \bigl(  [a_4'',a_5][a_{6} , a_7] + [a_4'' , a_6][a_5 , a_{7} ]  \bigr) 
\\
+ & a_4'' [a_{1} , a_{2} , a_3 ] \bigl(  [a_4',a_5][a_{6} , a_7] + [a_4' , a_6][a_5 , a_{7} ]  \bigr) + [a_{1} , a_{2} , a_3, a_4''  ] \bigl(  [a_4',a_5][a_{6} , a_7] + [a_4' , a_6][a_5 , a_{7} ]  \bigr) 
\\
+ & [a_{1} , a_{2} , a_3 ] \bigl(  [a_4' ,a_5, a_4''] [a_{6} , a_7]  + [a_4'  , a_6, a_4'']  [a_5 , a_{7} ]  \bigr) . 
\end{align*}
Here
\begin{align*}
& a_4' [a_{1} , a_{2} , a_3 ] \bigl(  [a_4'',a_5][a_{6} , a_7] + [a_4'' , a_6][a_5 , a_{7} ]  \bigr) +  a_4'' [a_{1} , a_{2} , a_3 ] \bigl(  [a_4',a_5][a_{6} , a_7] + [a_4' , a_6][a_5 , a_{7} ]  \bigr) \\
& +  [a_{1} , a_{2} , a_3 ] \bigl(  [a_4' ,a_5, a_4''] [a_{6} , a_7]  + [a_4'  , a_6, a_4'']  [a_5 , a_{7} ]  \bigr)  \in I
\end{align*}
by the induction hypothesis and 
\[
 [a_{1} , a_{2} , a_3, a_4'  ] \bigl(  [a_4'',a_5][a_{6} , a_7] + [a_4'' , a_6][a_5 , a_{7} ]  \bigr) + [a_{1} , a_{2} , a_3, a_4''  ] \bigl(  [a_4',a_5][a_{6} , a_7] + [a_4' , a_6][a_5 , a_{7} ]  \bigr)  \in I
\]
by Lemma \ref{lemma_422}. It follows that $f \in I$ if $a_4 = a_4' a_4''$.

Thus, each polynomial of the form (\ref{prod_a_322}) and of degree $m$ belongs to $I$.

%%%%%%%%%%%%%%%%%%%%%%%%%%%%%%%%%%%%%%%%%%%%%%%%
%%%%%%%%%%%%%%%%%%%%%%%%%%%%%%%%%%%%%%%%%%%%%%%%

\textbf{Case 4.} Suppose now that $f$ is of the form (\ref{prod_a_2212}),
\begin{align*}
f = & f(a_1,a_2, \dots , a_7)
\\
= & \Big[ \big( [a_{1},a_{2}][a_{3},a_{4}]+[a_{1},a_{3}][a_{2},a_{4}] \big) ,a_{5} \Big] [a_{6},a_{7}]
+ \Big[ \big( [a_{1},a_{2}][a_{3},a_{4}]+[a_{1},a_{3}][a_{2},a_{4}] \big) ,a_{6} \Big] [a_{5},a_{7}] .
\end{align*}
If $\deg f = 7$ then $f$ is of the form (\ref{prod_x_2212}) so $f \in I$. If $\deg f>7$ then for some $i$, $1\le i \le 7$, we have $a_i=a_i'a_i''$, where $\deg a_i'$, $\deg a_i'' < \deg a_i$. We claim that to check that $f \in I$ one may assume without loss of generality that $i \in \{ 4, 6,7 \} $.

Indeed, it is clear that 
\begin{equation}
\label{case4_56}
f(a_1, \dots, a_4, a_5, a_6, a_7) = f(a_1, \dots , a_4, a_6, a_5, a_7).
\end{equation}
On the other hand, if $\{ k, \ell , k', \ell' \} = \{ 1,2,3,4 \}$ then, by the induction hypothesis, 
\[
\bigl[ [a_{k},a_{\ell}] , [a_{k'},a_{\ell'}] ,a_{5} \bigr ] =  [a_{k},a_{\ell} , a_{k'},a_{\ell'} ,a_{5}  ] - [a_{k},a_{\ell} , a_{\ell'},a_{k'} ,a_{5}  ] \in I .
\]
It follows that 
\begin{align}
\label{case4_234}
& f(a_1,a_2,a_3,a_4,a_5, \dots ) \equiv f(a_2,a_1,a_4,a_3,a_5, \dots ) \pmod{I} 
\\
\equiv & f(a_3,a_4,a_1,a_2,a_5, \dots ) \pmod{I} \equiv f(a_4,a_3,a_2,a_1, a_5, \dots ) \pmod{I}.
\nonumber
\end{align}
It follows from (\ref{case4_56}) and (\ref{case4_234}) that one may assume without loss of generality that $i \in \{ 4, 6,7 \} $, as claimed.

Suppose that $a_4 = a_4' a_4''$. Then
\begin{align}
\label{case44_4}
& f =  f(a_1,a_2, a_3, a_4' a_4'', a_5, a_6 , a_7)
\\
& =  \Big[ \big( [a_{1},a_{2}][a_{3},a_{4}' a_4'']+[a_{1},a_{3}][a_{2},a_{4}' a_4''] \big) ,a_{5} \Big] [a_{6},a_{7}]
+ \Big[ \big( [a_{1},a_{2}][a_{3},a_{4}' a_4'']+[a_{1},a_{3}][a_{2},a_{4}' a_4''] \big) ,a_{6} \Big] 
[a_{5},a_{7}] 
\nonumber
\\
& = \Big[ \big( [a_{1},a_{2}][a_{3},a_{4}' ]+[a_{1},a_{3}][a_{2},a_{4}' ] \big) a_4'' ,a_{5} \Big] [a_{6},a_{7}]
+ \Big[ \big( [a_{1},a_{2}][a_{3},a_{4}'' ]+[a_{1},a_{3}][a_{2},a_{4}'' ] \big) a_4' ,a_{5} \Big] [a_{6},a_{7}]
\nonumber
\\
& -  \Big[ \big( [a_{1},a_{2}][a_{3}, a_4'', a_{4}']+[a_{1},a_{3}][a_{2}, a_4'', a_{4}'] \big) ,a_{5} \Big] [a_{6},a_{7}] + \Big[ \big( [a_{1},a_{2}][a_{3},a_{4}' ]+[a_{1},a_{3}][a_{2},a_{4}' ] \big) a_4'' ,a_{6} \Big] [a_{5},a_{7}]
\nonumber
\\
& + \Big[ \big( [a_{1},a_{2}][a_{3},a_{4}'' ]+[a_{1},a_{3}][a_{2},a_{4}'' ] \big) a_4' ,a_{6} \Big] [a_{5},a_{7}] -  \Big[ \big( [a_{1},a_{2}][a_{3}, a_4'', a_{4}']+[a_{1},a_{3}][a_{2}, a_4'', a_{4}'] \big) ,a_{6} \Big] [a_{5},a_{7}]
\nonumber
\end{align}
We have
\begin{align}
\label{case44_41}
& \Big[ \big( [a_{1},a_{2}][a_{3}, a_4'', a_{4}']+[a_{1},a_{3}][a_{2}, a_4'', a_{4}'] \big) ,a_{5} \Big] [a_{6},a_{7}] 
\\
 + & \Big[ \big( [a_{1},a_{2}][a_{3}, a_4'', a_{4}']+[a_{1},a_{3}][a_{2}, a_4'', a_{4}'] \big) ,a_{6} \Big] [a_{5},a_{7}]
\nonumber
\\
 =  & \big( [a_{1},a_{2},a_5][a_{3}, a_4'', a_{4}']+[a_{1},a_{3}, a_5][a_{2}, a_4'', a_{4}'] \big) [a_{6},a_{7}] +  \big( [a_{1},a_{2},a_6][a_{3}, a_4'', a_{4}'] 
\nonumber
\\
+ & [a_{1},a_{3}, a_6][a_{2}, a_4'', a_{4}'] \big) [a_{5},a_{7}] +  [a_{1},a_{2}] \bigl( [a_{3}, a_4'', a_{4}' ,a_5] [a_{6},a_{7}] + [a_{3}, a_4'', a_{4}' a_6][a_{5},a_{7}] \bigr)
\nonumber
\\
+ & [a_{1},a_{3}] \bigl( [a_{2}, a_4'', a_{4}', a_5]  [a_{6},a_{7}] + [a_{2}, a_4'', a_{4}', a_6]   [a_{5},a_{7}] \bigr) \in I
\nonumber
\end{align}
because, by the induction hypothesis,  
\[
[a_{k}, a_4'', a_{4}', a_{\ell}]  [a_{\ell'},a_{7}] + [a_{k}, a_4'', a_{4}', a_{\ell'}]   [a_{\ell},a_{7}] \in I
\]
and $[a_{1},a_{k},a_{\ell}][a_{k'}, a_4'', a_{4}'] \in I$ if $\{ k, k' \} = \{ 2,3 \}$ and $\{ \ell, \ell' \} = \{5,6 \}$.

Further,
\begin{align}
\label{case44_42}
& \Big[ \big( [a_{1},a_{2}][a_{3},a_{4}' ]+[a_{1},a_{3}][a_{2},a_{4}' ] \big) a_4'' ,a_{5} \Big] [a_{6},a_{7}] + \Big[ \big( [a_{1},a_{2}][a_{3},a_{4}' ]+[a_{1},a_{3}][a_{2},a_{4}' ] \big) a_4'' ,a_{6} \Big] [a_{5},a_{7}]
\\
& =a_4'' \bigg( \Big[ \big( [a_{1},a_{2}][a_{3},a_{4}' ]+[a_{1},a_{3}][a_{2},a_{4}' ] \big)  ,a_{5} \Big] [a_{6},a_{7}]  + \Big[ \big( [a_{1},a_{2}][a_{3},a_{4}' ]+[a_{1},a_{3}][a_{2},a_{4}' ] \big) ,a_{6} \Big] [a_{5},a_{7}] \bigg)
\nonumber
\\
& + \Big[ \big( [a_{1},a_{2}][a_{3},a_{4}' ]+[a_{1},a_{3}][a_{2},a_{4}' ] \big)  ,a_{5}, a_4'' \Big] [a_{6},a_{7}]  +  \Big[ \big( [a_{1},a_{2}][a_{3},a_{4}' ]+[a_{1},a_{3}][a_{2},a_{4}' ] \big) ,a_{6}, a_4'' \Big] [a_{5},a_{7}]
\nonumber
\\
& + \big( [a_{1},a_{2}][a_{3},a_{4}' ]+[a_{1},a_{3}][a_{2},a_{4}' ] \big) \bigl( [a_4'',a_5][a_6,a_7] + [a_4'',a_6] [a_5,a_7] \bigr) \in I
\nonumber
\end{align}
because the polynomials of types (\ref{prod_a_2212}) and (\ref{comm_a_2211}) of degree less than $m$ belong to $I$ by the induction hypothesis and the polynomial of type (\ref{prod_a_2222}) of degree $m$ belongs to $I$ by Case 1. Similarly, 
\begin{equation}
\label{case44_43}
\Big[ \big( [a_{1},a_{2}][a_{3},a_{4}'' ]+[a_{1},a_{3}][a_{2},a_{4}'' ] \big) a_4' ,a_{5} \Big] [a_{6},a_{7}] + \Big[ \big( [a_{1},a_{2}][a_{3},a_{4}'' ]+[a_{1},a_{3}][a_{2},a_{4}'' ] \big) a_4' ,a_{6} \Big] [a_{5},a_{7}] \in I.
\end{equation}
It follows from (\ref{case44_4}), (\ref{case44_41}), (\ref{case44_42}) and (\ref{case44_43}) that $f \in I$ if $a_4 = a_4' a_4''$.

Now suppose that $a_6 = a_6' a_6''$. Let $g = [a_{1},a_{2}][a_{3},a_{4}]+[a_{1},a_{3}][a_{2},a_{4}]$. We have
\begin{align}
\label{case44_6}
f &=  [g ,a_{5} ] [a_{6}' a_6'',a_{7}] + [ g ,a_{6}' a_6'' ] [a_{5},a_{7}] 
= [ g ,a_{5} ] a_{6}' [a_6'',a_{7}]  + [ g ,a_{5} ] [a_{6}' ,a_{7}] a_6'' +  a_{6}' [ g , a_6'' ] [a_{5},a_{7}] 
\\
& +  [ g ,a_{6}'  ] a_6'' [a_{5},a_{7}] = a_{6}' \bigl(  [ g ,a_{5} ] [a_6'',a_{7}]  +   [ g , a_6'' ] [a_{5},a_{7}] \bigr) + [ g ,a_{5} , a_{6}' ] [a_6'',a_{7}]  + \bigl( [ g ,a_{5} ] [a_{6}' ,a_{7}] 
\nonumber
\\
& + [ g ,a_{6}'  ] [a_{5},a_{7}]  \bigr) a_6'' - [ g ,a_{6}'  ]  [a_{5},a_{7}, a_6''] .
\nonumber
\end{align}
Here
\begin{align}
\label{case44_61}
& a_{6}' \bigl(  [ g ,a_{5} ] [a_6'',a_{7}]  +   [ g , a_6'' ] [a_{5},a_{7}] \bigr) + [ g ,a_{5} , a_{6}' ] [a_6'',a_{7}]  + \bigl( [ g ,a_{5} ] [a_{6}' ,a_{7}]  + [ g ,a_{6}'  ] [a_{5},a_{7}]  \bigr) a_6''
\\
= & a_{6}'\bigg(  \Big[ \big( [a_{1},a_{2}][a_{3},a_{4}]+[a_{1},a_{3}][a_{2},a_{4}] \big) ,a_{5} \Big] [a_6'',a_{7}]  +   \Big[ \big( [a_{1},a_{2}][a_{3},a_{4}]+[a_{1},a_{3}][a_{2},a_{4}] \big) , a_6'' \Big] 
[a_{5},a_{7}] \bigg)
\nonumber
\\
+ & \Big[ \big( [a_{1},a_{2}][a_{3},a_{4}]+[a_{1},a_{3}][a_{2},a_{4}] \big) ,a_{5} , a_{6}' \Big] [a_6'',a_{7}] 
+ \bigg( \Big[ \big( [a_{1},a_{2}][a_{3},a_{4}]+[a_{1},a_{3}][a_{2},a_{4}] \big) ,a_{5} \Big] [a_{6}' ,a_{7}] 
\nonumber
\\
+ & \Big[ \big( [a_{1},a_{2}][a_{3},a_{4}]+[a_{1},a_{3}][a_{2},a_{4}] \big) ,a_{6}'  \Big] [a_{5},a_{7}]  \bigg) a_6'' \in I
\nonumber
\end{align}
because polynomials of types (\ref{prod_a_2212}) and (\ref{comm_a_2211}) of degree less than $m$ belong to $I$ by the induction hypothesis. Further, 
\begin{align}
\label{case44_62}
 & [ g ,a_{6}'  ]  [a_{5},a_{7}, a_6''] = \Big[ \big( [a_{1},a_{2}][a_{3},a_{4}]+[a_{1},a_{3}][a_{2},a_{4}] \big) ,a_{6}'  \Big]  [a_{5},a_{7}, a_6'']
 \\
 = &  \bigl( [a_{1},a_{2}][a_{3},a_{4}, a_{6}' ] + [a_{1},a_{2}, a_{6}' ][a_{3},a_{4} ] +  [a_{1},a_{3}][a_{2},a_{4} ,a_{6}' ] + [a_{1},a_{3}, ,a_{6}' ][a_{2},a_{4} ] \bigr) [a_{5},a_{7}, a_6'']
\nonumber
\\
\equiv & \Big( [a_{1},a_{2}][a_{3},a_{4}, a_{6}' ] +    [a_{3},a_{4} ] [a_{1},a_{2}, a_{6}' ] +  [a_{1},a_{3}][a_{2},a_{4} ,a_{6}' ] 
\nonumber
\\
+ & [a_{2},a_{4} ] [a_{1},a_{3}, ,a_{6}' ] \Big) [a_{5},a_{7}, a_6''] \pmod{I} \equiv 0 \pmod{I}
\nonumber 
\end{align}
because the polynomials of the form (\ref{prod_a_33}) of degree less than $m$ belong to $I$ by the induction hypothesis. It follows from (\ref{case44_6}), (\ref{case44_61}) and  (\ref{case44_62}) that $f \in I$ if $a_6 = a_6' a_6''$.

Finally, suppose that $a_7 = a_7' a_7''$. Then
\begin{align*}
\label{case44_7}
f = & [ g ,a_{5} ] [a_6, a_{7}' a_7''] + [ g ,a_{6} ] [a_{5},a_{7}' a_7'']  =  [ g ,a_{5} ] a_7' [a_6, a_7''] +  [ g ,a_{5} ] [a_6, a_{7}' ] a_7''  + [ g ,a_{6} ] a_{7}' [a_{5}, a_7'']  
\\
+ & [ g ,a_{6} ] [a_{5},a_{7}' ] a_7'' = \bigl( [ g ,a_{5} ] [a_6, a_7''] + [ g ,a_{6} ]  [a_{5}, a_7''] \bigr)  a_{7}' + \bigl( [ g ,a_{5} ] [a_6, a_{7}' ]  + [ g ,a_{6} ] [a_{5},a_{7}' ] \bigr) a_7'' 
\nonumber
\\
- & [ g ,a_{5} ] [a_6, a_7'', a_7'] - [ g ,a_{6} ]  [a_{5}, a_7'', a_7'] .
\nonumber
\end{align*}
One can prove repeating the argument used for $i=6$ (when $a_6=a_6' a_6''$) that $f \in I$ if $a_7=a_7' a_7''$.

Thus, each polynomial $f$ of the form (\ref{prod_a_2212})  and of degree $m$ belongs to $I$.

%%%%%%%%%%%%%%%%%%%%%%%%%%%%%%%%%%%%%%%%%%%%%%%%
%%%%%%%%%%%%%%%%%%%%%%%%%%%%%%%%%%%%%%%%%%%%%%%%

\medskip
\textbf{Case 5.} Suppose that $f$ is of the form (\ref{comm_a_2211}),
\[
f = \Big[ \big( [a_1, a_2] [a_3, a_4] + [a_1, a_3] [a_2, a_4] \big) , a_5, a_6 \Big] .
\]
If $\deg f = 6$ then each  monomial $a_i$ is of degree $1$ so $f$ is of the form (\ref{comm_x_2211}) and, therefore, $f \in I$. If $\deg f>6$ then, for some $i$, $1\le i \le 6$, we have $a_i=a_i' a_i''$ where $\deg a_i'$, $\deg a_i''< \deg a_i$.

We claim that to check that $f \in I$ one may assume without loss of generality that $ i \in \{ 4,5,6 \}$. Indeed, it follows from the induction hypothesis that if $\{ k,\ell ,k', \ell' \} = \{ 1,2,3,4 \}$ then
\[
\big[ [a_{k}, a_{\ell}], [a_{k'}, a_{\ell'}], a_5 \big] =
[a_{k}, a_{\ell}, a_{k'}, a_{\ell'}, a_5 ] - [a_{k}, a_{\ell}, a_{k'}, a_{\ell'}, a_5 ]
\in I .
\]
Hence, $\big[ [a_{k}, a_{\ell}], [a_{k'}, a_{\ell'}], a_5 , a_6 \big] \in I$. 
It follows that
\begin{align*}
\Big[ \bigl( [a_1,a_2] [a_3,a_4] + [a_1,a_3][a_2,a_4] \bigr) , a_5, a_6 \Big] & \equiv \Big[ \bigl( [a_2,a_1] [a_4,a_3] + [a_2,a_4][a_1,a_3] \bigr) , a_5, a_6 \Big]  \pmod{I}
\\
& \equiv  \Big[ \bigl( [a_3,a_4] [a_1,a_2] + [a_3,a_1][a_4,a_2] \bigr) ,a_5, a_6 \Big] \pmod{I} 
\\
& \equiv    \Big[ \bigl( [a_4,a_3] [a_2,a_1] + [a_4,a_2][a_3,a_1] \bigr) , a_5, a_6 \Big] \pmod{I} ,
\end{align*}
that is, 
\begin{align*}
\label{case1_234}
& f(a_1,a_2,a_3,a_4,a_5, a_6 ) \equiv f(a_2,a_1,a_4,a_3,a_5, a_6 ) \pmod{I} 
\\
\equiv & f(a_3,a_4,a_1,a_2,a_5, a_6 ) \pmod{I} \equiv f(a_4,a_3,a_2,a_1, a_5, a_6 ) \pmod{I}.
\nonumber
\end{align*}
The claim follows.

Suppose first that $a_{6}=a_6' a_6''$. Let $g=\big( [a_{1},a_{2}][a_{3},a_{4}]+[a_{1},a_{3}][a_{2},a_{4}] \big)$.  We have
\begin{align*}
f  = & [ g ,a_{5},a_6'a_6'' ] = a_6' [ g ,a_{5},a_6'' ] + [ g ,a_{5},a_6' ] a_6''.
\end{align*}
By the induction hypothesis, $[g , a_{5}, b ] = \Big[ \big( [a_{1},a_{2}][a_{3},a_{4}]+[a_{1},a_{3}][a_{2},a_{4}] \big) , a_{5}, b \Big] \in I$ for $b \in \{a_6',a_6''\}$ so in this case $f \in I$.

Suppose that $a_5=a_5'a_5''$. Then
\begin{align*}
f & = [ g , a_5'a_5'', a_{6} ] = \bigl[ a_5'[ g , a_5''] +  [ g , a_5'] a_5'', a_{6} \bigr]
\\
& = a_5' [ g , a_5'', a_{6} ] + [a_5',a_6] [ g , a_5'' ] + [ g , a_5' ] [a_5'',a_{6}] + [ g , a_5', a_{6} ] a_5'.
\end{align*}
By the induction hypothesis, $[ g , b, a_{6} ] = \Big[ \big( [a_{1},a_{2}] [a_{3},a_{4}] + [a_{1},a_{3}] [a_{2},a_{4}] \big) , b, a_{6} \Big] \in I$ for $b \in \{a_5',a_5''\}$. On the other hand,
\begin{align*}
& [a_5',a_6] [ g , a_5'' ] + [ g , a_5' ] [a_5'',a_{6}] =  [ g , a_5'' ] [a_5',a_6] + [ g , a_5' ] [a_5'', a_{6}] + \bigl[ g , a_5'',[a_5',a_6] \bigr] .
\end{align*}
Since each polynomial of degree $m$ of the form (\ref{prod_a_2212}) belongs to $I$, we have
\begin{align*}
& [ g , a_5' ] [a_5'', a_{6}] + [ g , a_5'' ] [a_5',a_6]
\\
= & \Big[ \big( [a_{1},a_{2}][a_{3},a_{4}]+[a_{1},a_{3}][a_{2},a_{4}] \big) , a_5' \Big] [a_5'',a_{6}]
+ \Big[ \big( [a_{1},a_{2}][a_{3},a_{4}]+[a_{1},a_{3}][a_{2},a_{4}] \big) , a_5'' \Big][a_5',a_6] \in I
\end{align*}
and, by the induction hypothesis,
\begin{align*}
& \bigl[ g , a_5'',[a_5',a_6] \bigr] = [ g , a_5'', a_5', a_6] -  [ g , a_5'', a_6 , a_5']  
\\
= & \Big[ \big( [a_{1},a_{2}][a_{3},a_{4}] + [a_{1},a_{3}][a_{2},a_{4}] \big) , a_5'', a_5', a_6 \Big]
%\\
%- & 
- \Big[ \big( [a_{1},a_{2}][a_{3},a_{4}] + [a_{1},a_{3}][a_{2},a_{4}] \big) ,a_5'', a_6,  a_5' \Big] \in I.
\end{align*}
Hence, in this case $f \in I$.

Now suppose that $a_4=a_4'a_4''$. We have
\begin{align*}
f = & \Big[ \big( [a_{1},a_{2}][a_{3},a_4'a_4''] + [a_{1},a_{3}][a_{2},a_4'a_4''] \big) , a_{5}, a_{6} \Big]
\\
= & \Big[ \big( [a_{1},a_{2}] a_4'[a_{3},a_4''] + [a_{1},a_{3}] a_4' [a_{2},a_4''] \big), a_{5}, a_{6} \Big]
%\\
%+ & 
+ \Big[ \big( [a_{1},a_{2}] [a_{3},a_4'] a_4'' + [a_{1},a_{3}][a_{2},a_4']a_4'' \big) , a_{5}, a_{6} \Big]
\\
= & \Big[ a_4' \big( [a_{1},a_{2}] [a_{3},a_4''] + [a_{1},a_{3}] [a_{2},a_4'']\big) , a_{5}, a_{6} \Big]
%\\
%+ & 
+ \Big[ \big( [a_{1},a_{2}] [a_{3},a_4']+[a_{1},a_{3}] [a_{2},a_4'] \big) a_4'', a_{5}, a_{6}\Big]
\\
+ & \Big[ \big( [a_{1},a_{2},a_4'] [a_{3},a_4''] + [a_{1},a_{3},a_4'][a_{2},a_4''] \big) , a_{5}, a_{6} \Big] .
\end{align*}
By (\ref{comm_exp}), we have
\begin{align*}
& \Big[ a_4' \big( [a_{1},a_{2}] [a_{3},a_4''] + [a_{1},a_{3}] [a_{2},a_4'']\big) , a_{5}, a_{6} \Big]
\\
= & a_4' \Big[ \big( [a_{1},a_{2}] [a_{3},a_4''] + [a_{1},a_{3}] [a_{2},a_4'']\big) , a_{5}, a_{6} \Big]
%\\
%+ & 
+ [ a_4', a_5] \Big[ \big( [a_{1},a_{2}] [a_{3},a_4''] + [a_{1},a_{3}] [a_{2},a_4'']\big) , a_{6} \Big]
\\
+ & [a_4', a_6] \Big[ \big( [a_{1},a_{2}] [a_{3},a_4''] + [a_{1},a_{3}] [a_{2},a_4'']\big) , a_{5} \Big]
%\\
%+ & 
+ [a_4', a_5, a_6] \big( [a_{1},a_{2}] [a_{3},a_4''] + [a_{1},a_{3}] [a_{2},a_4'']\big)
\\
= & a_4' \Big[ \big( [a_{1},a_{2}] [a_{3},a_4''] + [a_{1},a_{3}] [a_{2},a_4'']\big) , a_{5}, a_{6} \Big]
%\\
%- & 
- \bigg( \Big[ \big( [a_{1},a_{2}] [a_{3},a_4''] + [a_{1},a_{3}] [a_{2},a_4'']\big) , a_{6} \Big] [ a_5, a_4']
\\
+ & \Big[ \big( [a_{1},a_{2}] [a_{3},a_4''] + [a_{1},a_{3}] [a_{2},a_4'']\big) , a_{5} \Big] [a_6, a_4'] \bigg)
%\\
%+ & 
+ [a_4', a_5, a_6] \big( [a_{1},a_{2}] [a_{3},a_4''] + [a_{1},a_{3}] [a_{2},a_4'']\big)
\\
- & \bigg[ \Big[ \big( [a_{1},a_{2}] [a_{3},a_4''] + [a_{1},a_{3}] [a_{2},a_4'']\big) , a_{6} \Big] ,[ a_5, a_4'] \bigg]
%\\
% - & 
- \bigg[ \Big[ \big( [a_{1},a_{2}] [a_{3},a_4''] + [a_{1},a_{3}] [a_{2},a_4'']\big) , a_{5} \Big] , [a_6, a_4'] \bigg] .
\end{align*}
By the induction hypothesis, $\Big[ \big( [a_{1},a_{2}][a_{3}, a_4'']+[a_{1},a_{3}][a_{2}, a_4''] \big) ,a_{5},a_{6} \Big] \in I$. Further,
\[
[a_4',a_{5},a_{6}] \big( [a_{1},a_{2}][a_{3},a_4'']+[a_{1},a_{3}][a_{2},a_4''] \big)
\]
and
\begin{align*}
& \Big[ \big( [a_{1},a_{2}] [a_{3},a_4''] + [a_{1},a_{3}] [a_{2},a_4'']\big) , a_{6} \Big] [ a_5, a_4']
%\\
%+ & 
+ \Big[ \big( [a_{1},a_{2}] [a_{3},a_4''] + [a_{1},a_{3}] [a_{2},a_4'']\big) , a_{5} \Big] [a_6, a_4']
\end{align*}
are polynomials of degree $m$ of the forms (\ref{prod_a_322}) and (\ref{prod_a_2212}), respectively, so they belong to $I$. Finally,
\begin{align*}
& \bigg[ \Big[ \big( [a_{1},a_{2}] [a_{3},a_4''] + [a_{1},a_{3}] [a_{2},a_4'']\big) , a_{6} \Big] ,[ a_5, a_4'] \bigg]
\\
= & \Big[ \big( [a_{1},a_{2}] [a_{3},a_4''] + [a_{1},a_{3}] [a_{2},a_4'']\big) , a_{6} , a_5, a_4' \Big]
%\\
%- & 
- \Big[ \big( [a_{1},a_{2}] [a_{3},a_4''] + [a_{1},a_{3}] [a_{2},a_4'']\big) , a_{6} , a_4' , a_5 \Big]
\end{align*}
belongs to $I$ by the induction hypothesis and so does the commutator
\[
\bigg[ \Big[ \big( [a_{1},a_{2}] [a_{3},a_4''] + [a_{1},a_{3}] [a_{2},a_4'']\big) , a_{5} \Big] , [a_6, a_4'] \bigg] .
\]
It follows that
\[
\Big[ a_4' \big( [a_{1},a_{2}] [a_{3},a_4''] + [a_{1},a_{3}] [a_{2},a_4'']\big) , a_{5}, a_{6} \Big] \in I.
\]
One can check in a similar way that
\[
\Big[ \big( [a_{1},a_{2}] [a_{3},a_4']+[a_{1},a_{3}] [a_{2},a_4'] \big) a_4'', a_{5}, a_{6}\Big] \in I.
\]
Finally, by (\ref{comm_exp}), the commutator
\begin{equation}
\label{3211inI}
\Big[ \big( [a_{1},a_{2},a_4'] [a_{3},a_4''] + [a_{1},a_{3},a_4'][a_{2},a_4''] \big) , a_{5}, a_{6} \Big]
\end{equation}
can be written as a sum of products of two commutators in $a_4'$, $a_4''$ and $a_i$ $(i \in \{ 1,2,3,5,6\} )$.  The commutators in the products are either of length $2$ and $5$ or of length $3$ and $4$. By the induction hypothesis, the commutators of length $5$ belong to $I$ and so do the products that contain such commutators. The products of commutators of length $3$ and $4$ are, modulo $I$, of the form (\ref{prod_a_43}) and of degree $m$ so, by Case 2, they also belong to $I$. Hence, the commutator (\ref{3211inI}) belongs to $I$ and so does $f$ if $a_4 = a_4' a_4''$.

Thus, each polynomial $f = \Big[ \big( [a_1, a_2] [a_3, a_4] + [a_1, a_3] [a_2, a_4] \big) , a_5, a_6 \Big] $ of the form (\ref{comm_a_2211}) of degree $m$ belongs to $I$.

%%%%%%%%%%%%%%%%%%%%%%%%%%%%%%%%%%%%%%%
%%%%%%%%%%%%%%%%%%%%%%%%%%%%%%%%%%%%%%%

\medskip
\textbf{Case 6.} Suppose that $f$ is of the form (\ref{prod_a_33}),
\[
f= f(a_1, a_2, \dots , a_6) = [a_1,a_2,a_3][a_4,a_5,a_6].
\]
If $\deg f=6$ then each monomial $a_i$ is of degree 1 so $f$ is of the form (\ref{prod_x_33}) and, therefore, $f \in I$. If $\deg f >6$ then, for some $i$, $1 \le i \le 6$, we have $a_i=a_i'a_i''$ where $\deg a_i'$, $\deg a_i'' < \deg a_i$. We claim that to check that $f \in I$ one may assume without loss of generality that $i = 5$ or $i = 6$. Indeed, it is clear that $f(\dots , a_4, a_5, a_6 ) =  - f(\dots , a_5, a_4, a_6 )$. Also,
\begin{equation}
\label{33inI}
\big[ [a_1,a_2,a_3], [a_4,a_5,a_6] \big] \in I.
\end{equation}
Indeed, this commutator is a linear combination of the (left-normed) commutators of the form 
\[
[a_{k_1}, a_{k_2}, a_{k_3}, a_{k_4} , a_{k_5}, a_{k_6}]
\]
where $\{k_1 , \dots k_5, k_6 \} = \{ 1, \dots ,5,6\}$. Since, by the induction hypothesis, each commutator $[a_{k_1}, \dots , a_{k_5}]$ belongs to $I$, so does each commutator $[a_{k_1}, \dots , a_{k_5},a_{k_6}]$; therefore, (\ref{33inI}) holds. By (\ref{33inI}), we have
\[
f(a_1,a_2,a_3,a_4,a_5,a_6) \equiv f(a_4,a_5,a_6,a_1,a_2,a_3) \pmod{I}.
\]
The claim follows.

Suppose that $a_6=a_6'a_6''$. We have
\begin{align*}
f = & [a_{1} , a_{2} , a_{3} ][a_{4} , a_{5} , a_6' a_6'' ] = [a_{1} , a_{2} , a_{3} ] \big( a_6'[a_{4} , a_{5} ,a_6'' ]+[a_{4} , a_{5} , a_6']a_6''\big)
\\
= & a_6'[a_{1} , a_{2} , a_{3} ][a_{4} , a_{5} ,a_6'' ] + [a_{1} , a_{2} , a_{3} ][a_{4} , a_{5} , a_6']a_6''
%\\
%+ & 
+ [a_{1} , a_{2} , a_{3} ,a_6' ][a_{4} , a_{5} ,a_6'' ].
\end{align*}
By the induction hypothesis, $[a_{1} , a_{2} , a_{3} ][a_{4} , a_{5} ,b ]\in I$ if $b \in \{a_6', a_6''\}$. Also,  $[a_{1} , a_{2} , a_{3} , a_6' ][a_{4} , a_{5} ,a_6''] \in I$ because this is a polynomial of the form (\ref{prod_a_43}) and of degree $m$ so, by Case 2, it belongs to $I$. It follows that  $f \in I$.

Suppose that $a_5=a_5' a_5''$. Then
\begin{align*}
f = & [a_{1} , a_{2} , a_{3} ][a_{4} , a_5'a_5'' , a_{6} ] = [a_{1} , a_{2} , a_{3} ] \Big[ \big( a_5'[a_{4} , a_5''] +[a_{4} , a_5']a_5'' \big) , a_{6} \Bigr]
\\
= & [a_{1} , a_{2} , a_{3} ] \big(a_5'[a_{4} , a_5'' , a_{6} ]+[a_5',a_6][a_{4} , a_5'']+[a_{4} , a_5'][a_5'' , a_{6} ]+[a_{4} , a_5' , a_{6} ]a_5''\big).
\end{align*}
As in the previous case,
\begin{align*}
&[a_{1} , a_{2} , a_{3} ] \big( a_5'[a_{4} , a_5'' , a_{6} ] + [a_{4} , a_5' , a_{6} ]a_5'' \big)
=  a_5' [a_{1} , a_{2} , a_{3} ] [a_{4} , a_5'' , a_{6} ]
\\
+ & [a_{1} , a_{2} , a_{3} ] [a_{4} , a_5' , a_{6} ]a_5'' + [a_{1} , a_{2} , a_{3} , a_5'] [a_{4} , a_5'' , a_{6} ] \in I.
\end{align*}
Also,
\begin{align*}
& [a_{1} , a_{2} , a_{3} ] \big( [a_5',a_6][a_{4} , a_5''] + [a_{4} , a_5'][a_5'' , a_{6} ] \big)
%\\
%= & 
= [a_{1} , a_{2} , a_{3} ] \big( [a_5',a_6][a_{4} , a_5''] + [ a_5', a_{4}][ a_{6}, a_5'' ] \big)
\end{align*}
is a polynomial of the form (\ref{prod_a_322}) of degree $m$ so, by Case 3, it belongs to $I$. It follows that in this case $f \in I$.

Thus, each polynomial $f = [a_1,a_2,a_3][a_4,a_5,a_6]$ of the form (\ref{prod_a_33}) of degree $m$ belongs to $I$.

%%%%%%%%%%%%%%%%%%%%%%%%%%%%%%%%%%%%%%%%%%%%%%%
%%%%%%%%%%%%%%%%%%%%%%%%%%%%%%%%%%%%%%%%%%%%%%%

\medskip
\textbf{Case 7.} Suppose that $f$ is of the form (\ref{prod_a_42}),
\[
f = [a_1,a_2,a_3,a_4] [a_5,a_6] + [a_1,a_2,a_3,a_5][a_4,a_6].
\]
If $\deg f = 6$ then each  monomial $a_i$ is of degree $1$ so $f$ is of the form (\ref{prod_x_42}) and, therefore, $f \in I$. If $\deg f>6$ then, for some $i$, $1\le i \le 6$, we have $a_i=a_i' a_i''$ where $\deg a_i'$, $\deg a_i''< \deg a_i$.

Suppose first that $a_{6}=a_6' a_6''$. We have
\begin{align*}
f = & [a_{1}, a_{2}, a_{3}, a_{4} ][a_{5}, a_6'a_6'' ] + [a_{1} , a_{2} , a_{3} , a_{5} ][a_{4}, a_6'a_6'' ]
\\
= & [a_{1}, a_{2}, a_{3}, a_{4} ] \big( a_6'[a_{5}, a_6'' ] +[a_{5}, a_6' ]a_6'' \big)
%\\
%+ & 
+ [a_{1} , a_{2} , a_{3} , a_{5} ] \big( a_6'[a_{4}, a_6'' ] +[a_{4}, a_6' ]a_6'' \big)
\\
\equiv & a_6' \big( [a_{1}, a_{2}, a_{3}, a_{4} ][a_{5}, a_6'' ]+[a_{1} , a_{2} , a_{3} , a_{5} ][a_{4}, a_6'' ] \big)
\\
+ & \big( [a_{1}, a_{2}, a_{3}, a_{4} ][a_{5}, a_6' ]+[a_{1} , a_{2} , a_{3} , a_{5} ][a_{4}, a_6' ] \big) a_6'' \pmod{I} .
\end{align*}
By the induction hypothesis, $[a_{1}, a_{2}, a_{3}, a_{4} ][a_{5}, b ]+[a_{1} , a_{2} , a_{3} , a_{5} ][a_{4}, b ] \in I$ for $b \in \{ a_6', a_6'' \}$. It follows that in this case $f \in I$.

Suppose that $a_{5}=a_5'a_5''$. Then
\begin{align*}
f = & [a_{1}, a_{2}, a_{3}, a_{4} ][a_5'a_5'', a_6 ] + [a_{1} , a_{2} , a_{3} , a_5'a_5'' ][a_{4}, a_6 ]
\\
= & [a_{1}, a_{2}, a_{3}, a_{4} ] \big( a_5'[a_5'', a_6 ] + [a_5', a_6 ]a_5'' \big)
%\\
%+ & 
+ \big( a_5'[a_{1} , a_{2} , a_{3} ,a_5'' ]+[a_{1} , a_{2} , a_{3} , a_5']a_5'' \big) [a_{4}, a_6 ]
\\
\equiv & a_5' \big( [a_{1}, a_{2}, a_{3}, a_{4} ][a_5'', a_6 ]+[a_{1} , a_{2} , a_{3} ,a_5'' ][a_{4}, a_6 ] \big)
%\\
%+ & 
+ \big( [a_{1}, a_{2}, a_{3}, a_{4} ][a_5', a_6 ] +[a_{1} , a_{2} , a_{3} , a_5'][a_{4}, a_6 ] \big) a_5''
\\
- & [a_{1} , a_{2} , a_{3} , a_5'][a_{4}, a_6, a_5'' ] \pmod{I}.
\end{align*}
 We have $[a_{1}, a_{2}, a_{3}, a_{4} ][b, a_6 ]+[a_{1} , a_{2} , a_{3} ,b ][a_{4}, a_6 ] \in I$ for $b \in \{a_5', a_5''\}$ by the induction hypothesis and $[a_{1} , a_{2} , a_{3} , a_5'][a_{4}, a_6, a_5'' ]\in I$ since, by Case 2, the polynomials of the form (\ref{prod_a_43}) of degree $m$ belong to $I$. Hence, in this case $f \in I$. Similarly, $f \in I$ if $a_{4}=a_4' a_4''$.

Suppose that $a_{3}=a_3' a_3''$. Then
\begin{align*}
f = & [a_{1}, a_{2}, a_3'a_3'', a_4 ][a_5, a_6 ] + [a_{1} , a_{2} , a_3'a_3'' , a_5 ][a_{4}, a_6 ]
\\
= & \Big[ \big( a_3' [a_1, a_2, a_3''] + [a_1, a_2, a_3'] a_3'' \big) , a_4 \Big] [a_5, a_6]
%\\
%+ & 
+ \Big[ \big( a_3'[a_1, a_2, a_3''] + [a_1, a_2, a_3'] a_3'' \big) , a_5 \Big] [a_4, a_6]
\\
= & a_3' [a_{1}, a_{2}, a_3'', a_4 ][a_5, a_6 ] + a_3' [a_{1} , a_{2} , a_3'' , a_5 ][a_{4}, a_6 ]
%\\
%+ & 
+ [a_3',a_4][a_{1}, a_{2}, a_3''][a_5, a_6 ] 
\\
+ & [a_3',a_5][a_{1} , a_{2} , a_3'' ][a_{4}, a_6 ]
%\\
%+ & 
+ [a_{1}, a_{2}, a_3'][a_3'', a_4 ][a_5, a_6 ] + [a_{1} , a_{2} , a_3'][a_3'' , a_5 ][a_{4}, a_6 ]
\\
+ & [a_{1}, a_{2}, a_3', a_4 ]a_3''[a_5, a_6 ] + [a_{1} , a_{2} , a_3' , a_5 ]a_3''[a_{4}, a_6 ]
\\
\equiv & a_3' \big( [a_{1}, a_{2}, a_3'', a_4 ][a_5, a_6 ] + [a_{1} , a_{2} , a_3'' , a_5 ][a_{4}, a_6 ] \big)
%\\
%+ & 
+ [a_{1}, a_{2}, a_3''] \big( [a_3',a_4][a_5, a_6 ] + [a_3',a_5][a_{4}, a_6 ] \big)
\\
+ & [a_{1}, a_{2}, a_3'] \big( [a_3'', a_4 ][a_5, a_6 ] + [a_3'' , a_5 ][a_{4}, a_6 ] \big)
\\
+ & a_3'' \big( [a_{1}, a_{2}, a_3', a_4 ][a_5, a_6 ] + [a_{1} , a_{2} , a_3' , a_5 ][a_{4}, a_6 ] \big) \pmod{I} .
\end{align*}
By the induction hypothesis, for  $b \in \{ a_3', a_3''\}$ we have $[a_{1}, a_{2}, b, a_4 ][a_5, a_6 ] + [a_{1} , a_{2} , b , a_5 ][a_{4}, a_6 ] \in I$; also we  have $[a_{1}, a_{2}, b_1]([b_2, a_4][a_5, a_6 ] + [b_2 ,a_5][a_{4}, a_6 ])\in I$ since, by Case 3, the polynomials of the form (\ref{prod_a_322}) of degree $m$ belong to $I$. Hence, in this case $f \in I$.

Now suppose that $a_{2}=a_2'a_2''$. Then, by (\ref{comm_exp}),
\begin{align*}
f = & [a_{1}, a_2'a_2'', a_3, a_4 ][a_5, a_6 ] + [a_{1} , a_2'a_2'' , a_3 , a_5 ][a_{4}, a_6 ]
\\
= & \Big[ \big( a_2' [a_1, a_2''] + [a_1, a_2'] a_2'' \big) , a_3, a_4 \Big] [a_5, a_6]
%\\
%+ & 
+ \Big[ \big( a_2' [a_1, a_2''] + [a_1, a_2'] a_2'' \big) , a_3, a_5 \Big] [a_4, a_6]
\\
= & \big( a_2'[a_{1}, a_2'', a_3, a_4 ] + [a_2', a_4] [a_1, a_2'', a_3] + [a_1, a_2', a_3] [a_2'', a_4]
%\\
%+ & 
+ [a_1, a_2', a_3, a_4] a_2'' \big) [a_5, a_6 ] 
\\
+ & \Big[ \big( [a_2', a_3][a_1, a_2''] + [a_1, a_2'][a_2'', a_3] \big) , a_4 \Big] [a_5, a_6]
%\
%+ & 
+ \big( a_2' [a_1, a_2'', a_3, a_5] + [a_2', a_5][a_1, a_2'', a_3] 
\\
+ & [a_1, a_2', a_3] [a_2'', a_5]
%\\
%+ & 
+ [a_1, a_2', a_3, a_5] a_2'' \big) [a_4, a_6] + \Big[ \big( [a_2', a_3][a_1, a_2''] + [a_1, a_2'][a_2'', a_3] \big) , a_5 \Big] [a_4, a_6] .
\end{align*}
It follows that
\begin{align*}
f = & a_2'[a_{1}, a_2'', a_3, a_4 ][a_5, a_6 ] + a_2'[a_{1} , a_2'' , a_3 , a_5 ][a_{4}, a_6 ]
%\\
%+ & 
+ [a_2',a_4][a_{1}, a_2'', a_3 ][a_5, a_6 ] 
\\
+ & [a_2',a_5][a_{1} ,a_2'' , a_3 ][a_{4}, a_6 ]
%\\
%+ & 
+ [a_{1}, a_2',a_3][a_2'',  a_4 ][a_5, a_6 ] + [a_{1} , a_2',a_3][a_2'', a_5 ][a_{4}, a_6 ]
\\
+ &[a_{1}, a_2', a_3, a_4 ]a_2''[a_5, a_6 ] + [a_{1} , a_2', a_3 , a_5 ]a_2''[a_{4}, a_6 ]
\\
+ & \Big[ \big( [a_2', a_3][a_1, a_2''] + [a_1, a_2'][a_2'', a_3] \big) , a_4 \Big] [a_5, a_6]
%\\
%+ & 
+ \Big[ \big( [a_2', a_3][a_1, a_2''] + [a_1, a_2'][a_2'', a_3] \big) , a_5 \Big] [a_4, a_6]
\\
\equiv & a_2'\big( [a_{1}, a_2'', a_3, a_4 ][a_5, a_6 ] + [a_{1} , a_2'' , a_3 , a_5 ][a_{4}, a_6 ] \big)
%\\
%+ & 
+ a_2''\big( [a_{1}, a_2', a_3, a_4 ][a_5, a_6 ] + [a_{1} , a_2', a_3 , a_5 ][a_{4}, a_6 ] \big)
\\
+ & \Big[ \big( [a_2', a_3][a_1, a_2''] + [a_2', a_1][a_3, a_2''] \big) , a_4 \Big] [a_5, a_6]
%\\
%+ & 
+ \Big[ \big( [a_2', a_3][a_1, a_2''] + [a_2', a_1][a_3, a_2''] \big) , a_5 \Big] [a_4, a_6]
\\
+ & [a_{1}, a_2'', a_3 ]\big( [a_2',a_4] [a_5, a_6 ] + [a_2',a_5][a_{4}, a_6 ] \big) + [a_{1}, a_2',a_3] \big( [a_2'',  a_4 ][a_5, a_6 ] + [a_2'', a_5 ][a_{4}, a_6 ] \big) \pmod{I}.
\end{align*}
By the induction hypothesis, $[a_{1}, b, a_3, a_4 ][a_5, a_6 ] + [a_{1} , b , a_3 , a_5 ][a_{4}, a_6 ] \in I$ for $b \in \{a_2',a_2''\}$.
We have also
\[
 [a_{1}, b_1, a_3 ]\big( [b_2,a_4] [a_5, a_6 ] + [b_2,a_5][a_{4}, a_6 ] \big) \in I
\]
if $\{ b_1,b_2 \} = \{ a_2', a_2'' \}$ and
\begin{align*}
& \big[ [a_2', a_3][a_1, a_2''] + [a_2', a_1][a_3, a_2''] , a_4 \big] [a_5, a_6]
%\\
%+ & 
+ \big[ [a_2', a_3][a_1, a_2''] + [a_2', a_1][a_3, a_2''] , a_5 \big] [a_4, a_6] \in I
\end{align*}
because, by Cases 3 and 4, the polynomials of degree $m$ of the forms (\ref{prod_a_322}) and (\ref{prod_a_2212}) belong to $I$. Hence, in this case $f \in I$. 

Thus, if $f$ is a polynomial of the form (\ref{prod_a_42}) of degree $m$ then $f$ belongs to $I$.

%%%%%%%%%%%%%%%%%%%%%%%%%%%%%%%%%%%%%%%%%%%%%
%%%%%%%%%%%%%%%%%%%%%%%%%%%%%%%%%%%%%%%%%%%%%

\medskip
\textbf{Case 8.} Finally, suppose that $f$ is of the form (\ref{comm_a_5}), $f = [a_1,a_2,a_3,a_4,a_5]$. Since $m = \deg f > 5$, we have $a_i = a'_i a''_i$ for some $i$, $1 \le i \le 5$, where $\deg a'_i, \deg  a''_i < \deg  a_i$.

Suppose that $a_{5}=a_5' a_5''$. Then
\[
f = [a_1, a_2, a_3, a_4 ,a_5'a_5'' ] = a_5' [a_1, a_2, a_3, a_4 ,a_5''] + [a_1, a_2, a_3, a_4 ,a_5'] a_5''
\]
so, by the induction hypothesis, $f \in I$.

Now suppose that $a_{4}=a_4' a_4''$. Then
\begin{align*}
f & = [a_1, a_2, a_3, a_4'a_4'' , a_5] = \Big[ \big( a_4' [a_1, a_2, a_3, a_4''] + [a_1, a_2, a_3, a_4'] a_4''\big) , a_5 \Big]
\\
& = a_4' [a_1, a_2, a_3, a_4'' ,a_5] + [a_4', a_5] [a_1, a_2, a_3, a_4''] + [a_1, a_2, a_3, a_4'] [a_4'', a_5] + [a_1, a_2, a_3, a_4', a_5] a_4'' 
\\
& \equiv a_4' [a_1, a_2, a_3, a_4'', a_5] + [a_1, a_2, a_3, a_4''] [a_4', a_5]
+ [a_{1}, a_{2}, a_{3}, a_{4}'][a_4'', a_5] + [a_{1}, a_{2}, a_{3}, a_4' ,a_{5} ] a_4''  \pmod{I} .
\end{align*}
By the induction hypothesis, we have $[a_{1}, a_{2}, a_{3}, b ,a_{5} ] \in I$ for $b \in \{a_4',a_4''\}$. On the other hand,
\[
[a_{1}, a_{2}, a_{3}, a_{4}''] [a_4', a_5] + [a_{1}, a_{2}, a_{3}, a_{4}'] [a_4'', a_5] \in I
\]
because, by Case 7, the polynomials of the form (\ref{prod_a_42}) of degree $m$ belong to $I$. Hence, in this case $f \in I$.

Further, suppose that $a_{3}=a_3'a_3''$. Then, by (\ref{comm_exp}),
\begin{align*}
f & = [a_{1}, a_{2}, a_3'a_3'', a_{4}, a_{5}] = \Big[ \big( a_3' [a_{1}, a_{2}, a_{3}''] + [a_{1}, a_{2}, a_{3}'] a_3'' \big) ,a_4, a_5 \Big]
\\
& = a_3' [a_{1}, a_{2}, a_3', a_{4}, a_5] + [a_3', a_5] [a_{1}, a_{2}, a_3'', a_{4}] + [a_3', a_4] [a_{1}, a_{2}, a_{3}'', a_5] + [a_3', a_4, a_5] [a_{1}, a_{2}, a_{3}''] 
\\
& + [a_{1}, a_{2}, a_{3}'][a_3'' ,a_4, a_{5}] + [a_{1}, a_{2}, a_{3}', a_{5}] [a_3'', a_4] + [a_{1}, a_{2}, a_3', a_{4}][a_3'', a_{5}] + [a_{1}, a_{2}, a_3', a_{4},a_{5}] a_3''
\\
& \equiv a_3' [a_{1}, a_{2}, a_3', a_{4}, a_5] - [a_{1}, a_{2}, a_3'', a_{4}] [a_5, a_3'] - [a_{1}, a_{2}, a_{3}'', a_5] [a_4, a_3']  + [a_3', a_4, a_5] [a_{1}, a_{2}, a_{3}''] 
\\
& + [a_3'' ,a_4, a_{5}]  [a_{1}, a_{2}, a_{3}'] - [a_{1}, a_{2}, a_{3}', a_{5}] [a_4, a_3''] - [a_{1}, a_{2}, a_3', a_{4}][a_5, a_3''] + [a_{1}, a_{2}, a_3', a_{4},a_{5}] a_3'' \pmod{I}.
\end{align*}
By the induction hypothesis, $[a_{1}, a_{2}, b, a_{4}, a_5]\in I$  for $b \in \{ a_3', a_3'' \}$. We also have
\[
[b_1, a_4, a_5][a_{1}, a_{2}, b_2] \in I
\]
and
\[
[a_{1}, a_{2}, b_1, a_{5}] [a_4, b_2] + [a_{1}, a_{2}, b_1, a_{4}][a_5, b_2] \in I
\]
where $\{ b_1, b_2 \} = \{ a_3', a_3'' \}$ because, by Cases 6 and 7, the polynomials of the forms (\ref{prod_a_33}) and (\ref{prod_a_42}) of degree $m$ belong to $I$. Hence, in this case $f \in I$.

Finally suppose that either $a_1 = a_1' a_1''$ or $a_2 = a'_2 a''_2$. It is clear that without loss of generality we may assume $a_2 = a'_2 a''_2$. Then, by (\ref{comm_exp}),
\begin{align*}
f & = [a_{1}, a_2'a_2'', a_{3}, a_{4} ,a_{5}] = \Big[ \big( a_2' [a_{1}, a_2''] +  [a_{1}, a_2'] a_2'' \big) , a_{3}, a_{4}, a_{5} \Big]
\\
& = \Big[ \big( a_2' [a_{1}, a_2'', a_3] +  [a_2', a_3] [a_{1}, a_2''] + [a_{1}, a_2'] [a_2'', a_3 ] + [a_1, a_2', a_3] a_2'' \big) , a_{4}, a_{5} \Big]
\\
& = \Big[ \big( [a_2', a_3][a_1, a_2''] + [a_1, a_2'][a_2'', a_3] \big) , a_4, a_5 \Big] +
a_2' [ a_1, a_2'', a_3, a_4, a_5] + [a_2', a_5][a_1, a_2'', a_3, a_4] 
\\
& + [a_2', a_4] [a_1, a_2'', a_3, a_5] + [a_2', a_4, a_5] [a_1, a_2'', a_3] +  [a_1, a_2', a_3][ a_2'', a_4, a_5] 
\\
& + [a_1, a_2', a_3, a_5] [a_2'', a_4] + [a_1, a_2', a_3, a_4] [a_2'', a_5] + [a_1, a_2', a_3, a_4, a_5] a_2''
\\
& \equiv \Big[ \big( [a_2', a_3][a_1, a_2''] + [a_2', a_1][a_3, a_2''] \big) , a_4, a_5 \Big] +
a_2' [ a_1, a_2'', a_3, a_4, a_5] - [a_1, a_2'', a_3, a_4] [a_5, a_2']
\\
& - [a_1, a_2'', a_3, a_5] [a_4, a_2'] + [a_2', a_4, a_5] [a_1, a_2'', a_3] +  [a_1, a_2', a_3][ a_2'', a_4, a_5] 
\\
& - [a_1, a_2', a_3, a_5] [a_4, a_2''] - [a_1, a_2', a_3, a_4] [a_5, a_2''] + [a_1, a_2', a_3, a_4, a_5] a_2'' \pmod{I} .
\end{align*}
We have $[a_{1}, b, a_{3}, a_{4} ,a_{5} ] \in I$ for $b\in \{ a_2',a_2''\}$ by the induction hypothesis. Also
\[
 \Big[ \big( [a_2', a_3][a_1, a_2''] + [a_2', a_1][a_3, a_2''] \big) , a_4, a_5 \Big]  \in I, 
\]
\[
[a_2', a_{4},a_5][a_1,a_2'', a_{3} ], \ [a_{1}, a_2', a_{3}][a_2'' ,a_4, a_{5} ]  \in I 
\]
and 
\[
[a_{1}, b_1, a_{3}, a_{4}][a_{5}, b_2 ] + [a_{1}, b_1, a_{3}, a_{5}][a_{4}, b_2 ] \in I
\]
where $\{ b_1, b_2 \} = \{ a_2', a_2'' \}$ because, by Cases 5, 6 and 7, all polynomials of the forms (\ref{comm_a_2211}), (\ref{prod_a_33}) and (\ref{prod_a_42}) of degree $m$ belong to $I$. Hence, in this case also $f \in I$.

Thus, if  $f$ is a polynomial of the form (\ref{comm_a_5}) of degree $m$ then $f$ belongs to $I$. This establishes the induction step and proves that $T^{(5)} \subseteq I$.

The proof of Theorem \ref{TbaseT5} is completed.

\section*{Acknowledgment}

This work was partially supported by CNPq grant 554712/2009--1. The second author was partially supported by CNPq grant 307328/2012--0.

\end{document}